\newtheorem{theorem}{Theorem}[section]
\newtheorem{corollary}[theorem]{Corollary}
\theoremstyle{definition}
\newtheorem{remark}[theorem]{Remark}
\newtheorem{definition}[theorem]{Definition}
\numberwithin{equation}{section}
\newcommand{\R}{\mathbb R}
\newcommand{\ent}{\int_{\R}}
\newcommand{\et}{\int_{\R^2}}
\newcommand{\al}{\alpha}
\newcommand{\lf}{\left}
\newcommand{\rh}{\right}
\newcommand{\sH}{\mathscr H}
\newcommand{\sA}{\mathscr A}
\begin{document}
	
\begin{center}
	\Large\bf{ Boundedness of Dunkl-Hausdorff operator in Lebesgue spaces}
	
\end{center}
\vspace{0.1in}

\begin{center}

	{\bf Sandhya Jain}\\
	Department of Mathematics\\
	Vivekananda College (University of Delhi)\\
	Vivek Vihar,  Delhi - 110095, India\\
	(email : singhal.sandhya@gmail.com)
	\medskip
	
	{\bf Alberto Fiorenza}\\
		Dipartimento di Architettura \\
		Universit\`{a} di Napoli Federico II, Via Monteoliveto, 3 \\
		I-80134 Napoli, Italy \\
		and Istituto per le Applicazioni del Calcolo
		``Mauro Picone", sezione di Napoli \\
		Consiglio Nazionale delle Ricerche \\
		via Pietro Castellino, 111 \\
		I-80131 Napoli, Italy
	{email :fiorenza@unina.it}	
	\medskip
	
	{\bf Pankaj Jain}\\
	Department of Mathematics\\
	South Asian University\\
	Akbar Bhawan, Chanakya Puri, New Delhi-110021, India\\
	(email: pankaj.jain@sau.ac.in, pankajkrjain@hotmail.com)
	\medskip
\end{center}
\bigskip

\begin{abstract}
In this paper, the $L^p_v(\R)$-boundedness of the Dunkl-Hausdorff operator $\displaystyle
H_{\al,\phi} f(x)=\ent\frac{ |\phi(t)|}{|t|^{2\al+2}}f\lf(\frac{x}{t}\rh) dt
$
has been characterized and for a certain type of weight $v$, the precise value of the norm $\|H_{\al,\phi}\|_{L^p_v(\R)\to L^p_v(\R)}$ has been obtained. This covers several of the existing results. Analogous results in two dimensions have also been proved.
\end{abstract}

\noindent \textbf{Keywords:} Hausdorff operator, Dunkl operator, Dunkl-Hausdorff operator, weighted Lebesgue spaces.
\medskip

\noindent 2010 Mathematics Subject Classification. 47B38, 26D10, 26D15.

\section{Introduction}

Let $v$ be a weight function, i.e., a function which is measurable, positive and finite almost everywhere on the specified domain. By $L^p_v(\R)$, $1\le p<\infty$, we denote the weighted Lebesgue space and a norm of a function $f\in L^p_v(\R)$ is given by
$$
\|f\|_{L^p_v(\R)}:=\lf(\ent|f(x)|^p v(x) dx\rh)^{1/p}.
$$
Occasionally, we shall be referring to the specific weight $v(x)=|x|^{2\al +1}$. The corresponding weighted Lebesgue space will be denoted by $L^p_\al(\R)$. The non-weighted Lebesgue space, i.e., when $v\equiv 1$, will be denoted by $L^p(\R)$.

Let $\phi\in L^1(\R)$. In the present paper, we are concerned with the Dunkl-Hausdorff operator \cite{daher, daher1, daher2, dunkl}
$$
H_{\al,\phi} f(x)=\ent\frac{ |\phi(t)|}{|t|^{2\al+2}}f\lf(\frac{x}{t}\rh) dt.
$$
When $\al=-1/2$, The operator $H_{\al,\phi}$ is the famous Hausdorff operator
$$
H_\phi f(x)=\ent\frac{|\phi(t)|}{|t|}f\lf(\frac{x}{t}\rh) dt,
$$
from which several well known operators can be deduced for suitable choices of $\phi$, e.g., for $\phi(t)=\frac{1}{t}\chi_{(1,\infty)}(t)$, the operator $H_\phi$ reduces to the standard Hardy averaging operator $$Hf(x)=\frac{1}{x}\int_0^x f(t)\,dt$$
while for $\phi(t)=\chi_{[0,1]}(t)$, it reduces to the adjoint of Hardy averaging operator $$H^*f(x)=\int_x^\infty \frac{f(t)}{t}\,dt.$$
Similarly, other operators like Calderon operator, Ces\'aro operator and fractional Riemann Liouville operator can also be deduced from $H_\phi$, see \cite{gorka, jain} for details.
For more updates on the Hausdorff operator, its extensions and in the framework of other function spaces one may refer to \cite{ar, chen, kan, miya, lm} and the survey \cite{L}.

By the replacement $\phi(s)=\frac{1}{s}\psi\lf(\frac{1}{s}\rh)$, $s>0$,  the operator $H_\phi$ (considered on $\R^+$) becomes equivalent to
$$G_\psi g(x)=\frac{1}{x}\int_0^\infty \psi\lf(\frac{t}{x}\rh)g(t)\,dt.$$
It was proved by Golberg (\cite{gold}, Theorem 1) that if $\psi\ge 0$ on $\R^+$ is such that $\displaystyle\ent \frac{\psi(t)}{\sqrt{t}} dt =:K<\infty$, then the operator $G_\psi$ (and consequently $H_\phi$) is a bounded operator on $L^2(\R^+)$ and $\|G_\psi\|\leq K.$ The $L^p$-boundedness of $G_\psi$ is derived in (\cite{HLP}, Theorem 319) and for many other extensions with sharp constants one may refer to  (\cite{KS}, Theorem 6.4 and bibliographic notes to Chapter 2 therein). In \cite{jain}, the authors reestablished the $L^p(\R^+)$-boundedness of $G_\psi$ and via a new proof of the lower bound, obtained the precise value of $\|G_\psi\|_{L^p(\R^+)\to L^p(\R^+)}$ as
$$
\|G_\psi\|_{L^p(\R^+)\to L^p(\R^+)}= \int_0^\infty \frac{\psi(t)}{{t^{1/p}}}\, dt =:K_p,\quad 1<p<\infty.
$$

Recently, in \cite{gorka}, a two weight characterization of the boundedness of $H_\phi$ between $L^p_v(\R^+)$ and $L^p_w(\R^+)$ has been given. Moreover, in the same paper, the corresponding boundedness has been studied in the framework of other function spaces as well, namely, grand Lebesgue spaces and variable exponent Lebesgue spaces.

Coming back to the Dunkl-Hausdorff operator $H_{\al,\phi}$, its $L_\al^1(\R)$ boundedness has been proved in \cite{daher} whereas $L_\al^p(\R)$ boundedness is obtained in \cite{daher2} and in each case, a sufficient condition has been provided. We, in this paper, generalize these results by providing a characterization for the $L_v^p(\R)$ boundedness of $H_{\al,\phi}$ and for a certain type of weight $v$, we provide the precise value of the norm $\|H_{\al,\phi}\|_{L^p_v(\R)\to L^p_v(\R)}$. Moreover, a sufficient condition has been proved for two weights and two indices boundedness, i.e., $H_{\al,\phi}:L^p_w(\R)\to L^q_v(\R)$ boundedness. These results have also been proved in the two dimensional framework.

\section{One dimensional case}

We begin by proving the following:

\begin{theorem}\label{t2.1}
	Let $1<p<\infty$, $v$ be weight function and $\phi\in L^1(\R)$ be such that $$\displaystyle \ent \frac{|\phi(t)|}{{|t|^{{2\al+2}-\frac{1}{p}}}}\lf(\sup_{y\in\R}\frac{v(ty)}{v(y)}\rh)^{1/p} dt =:A_{\sup}<\infty.$$ Then the operator $H_{\al,\phi}$ is a bounded operator on $L^p_v(\R)$ and 
	$$\|H_{ \al,\phi}f\|_{L^p_v(\R)}\leq A_{\sup} \|f\|_{L^{p}_{v}(\R)}.
	$$
\end{theorem}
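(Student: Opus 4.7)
The plan is to apply Minkowski's integral inequality (in the continuous form) to bring the outer $L^p_v$-norm inside the integral in $t$, and then handle the resulting weighted $L^p$-norm of a dilate of $f$ by a change of variables and the pointwise bound of $v(ty)/v(y)$ by its supremum.

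More precisely, first I would write
\begin{equation*}
\|H_{\al,\phi} f\|_{L^p_v(\R)} = \left(\ent \left|\ent \frac{|\phi(t)|}{|t|^{2\al+2}} f\!\left(\frac{x}{t}\right) dt \right|^p v(x)\, dx\right)^{1/p}
\end{equation*}
and apply Minkowski's integral inequality with respect to the measure $v(x)\,dx$ on the outside, obtaining
\begin{equation*}
\|H_{\al,\phi} f\|_{L^p_v(\R)} \leq \ent \frac{|\phi(t)|}{|t|^{2\al+2}} \left(\ent \left|f\!\left(\frac{x}{t}\right)\right|^p v(x)\, dx\right)^{1/p} dt.
\end{equation*}

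Next, I would perform the substitution $y=x/t$ (so $dx = |t|\, dy$) in the inner integral, giving $\int_\R |f(y)|^p v(ty)\,|t|\,dy$. The key step is then the pointwise estimate
\begin{equation*}
v(ty) = \frac{v(ty)}{v(y)} v(y) \leq \left(\sup_{y\in\R} \frac{v(ty)}{v(y)}\right) v(y),
\end{equation*}
which reduces the inner $L^p_v$-norm of the dilate to a constant (depending on $t$) times $\|f\|_{L^p_v(\R)}$. Putting the $|t|^{1/p}$ from the Jacobian together with $|t|^{-(2\al+2)}$ yields exactly $|t|^{-(2\al+2-1/p)}$, so the total bound becomes $A_{\sup}\|f\|_{L^p_v(\R)}$.

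There is no real obstacle here; the only mild points of care are (i) justifying Minkowski's integral inequality, which is valid because the integrand is nonnegative and measurable in $(x,t)$, and (ii) ensuring that the sup over $y$ is measurable in $t$ and that $A_{\sup}<\infty$ guarantees finiteness of the intermediate expressions, which is precisely the hypothesis. The argument is symmetric with respect to the sign of $t$, so no separate treatment of $t>0$ and $t<0$ is needed.
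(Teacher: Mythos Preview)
Your proposal is correct and follows essentially the same route as the paper: the paper also writes out the norm, applies the generalized Minkowski inequality to move the $L^p_v$-norm inside the $t$-integral, performs the change of variables $y=x/t$, and then replaces $v(ty)/v(y)$ by its supremum over $y$ to extract $\|f\|_{L^p_v(\R)}$ and recover the factor $|t|^{-(2\al+2-1/p)}$. The paper mentions H\"older's inequality in its preamble, but no genuine use of H\"older appears in the displayed chain; the argument is exactly the Minkowski-plus-sup computation you describe.
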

\begin{proof}
	If $f \in L^p_v(\R)$ then by using generalised Minkowski inequality, change of variables and H\"older's inequality, we have
	\begin{align}\label{enew1}
	\|H_{ \al,\phi}f\|_{L^p_v(\R)}
	& =\lf(\ent |H_{ \al,\phi} f(x)|^p v(x)dx \rh)^\frac{1}{p} \nonumber\\
	& = \lf(\ent\lf|\ent \frac{|\phi(t)|}{|t|^{2\al+2}}f\lf(\frac{x}{t}\rh) dt\rh|^p v(x)dx \rh)^\frac{1}{p} \nonumber\\
	& \leq \lf(\ent \lf(\ent \frac{|\phi(t)|}{|t|^{2\al+2}}\lf|f\lf(\frac{x}{t}\rh)\rh|v^{\frac{1}{p}}(x)dt\rh)^p dx \rh)^\frac{1}{p}\nonumber\\
	& \leq  \ent \frac{|\phi(t)|}{|t|^{2\al+2}}|t|^{\frac{1}{p}} \lf(\ent \lf|f(y)\rh|^p v(yt)dy\rh)^\frac{1}{p} dt\nonumber\\
	& =  \ent \frac{|\phi(t)|}{|t|^{{2\al+2}-\frac{1}{p}}}\lf(\ent \lf|f(y)\rh|^p v(y)v^{-1}(y) v(yt)dy\rh)^{\frac{1}{p}} dt\\
	&\leq \lf(\ent \frac{|\phi(t)|}{|t|^{{2\al+2}-\frac{1}{p}}} \lf(\sup_{y\in\R}\frac{v(ty)}{v(y)}\rh)^{\frac{1}{p}} dt \rh)\lf(\ent |f(y)|^p v(y) dy \rh)^{\frac{1}{p}}\nonumber\\
	& = A_{\sup} \|f\|_{L^p_v(\R)}\nonumber
	\end{align}
and the assertion follows.
\end{proof}

The following theorem provides a converse of Theorem \ref{t2.1}. Here and throughout $p'$ denotes the conjugate index to $p$, i.e., ${1\over p}+{1\over p'}=1.$

	\begin{theorem}\label{t2.2}
		Let $1<p<\infty$, $v$ be a weight function and $\phi\in L^1(\R)$. If the operator $H_{\al,\phi}$ is a bounded operator on $L^p_v(\R)$,  then $$\|H_{ \al,\phi}\|\geq \ent \frac{|\phi(t)|}{{|t|^{2\al+2-\frac{1}{p}}}}\lf(\inf_{y\in\R}\frac{v(ty)}{v(y)}\rh)^{1/p} dt=:A_{\inf}.$$
	\end{theorem}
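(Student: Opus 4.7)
The plan is to produce test functions $f_N\in L^p_v(\R)$ with $\|H_{\al,\phi}f_N\|_{L^p_v(\R)}/\|f_N\|_{L^p_v(\R)}\to A_{\inf}$. Motivated by the fact that $|x|^{-1/p}$ is a formal eigenfunction of $H_{\al,\phi}$, I would use the truncated, weight-normalized variant
$$f_N(x):=|x|^{-1/p}\,v(x)^{-1/p}\,\chi_{E_N}(x),\qquad E_N:=\{x\in\R:1/N\le|x|\le N\}.$$
The factor $v^{-1/p}$ cancels against $v$ in the norm, giving the convenient value $\|f_N\|^p_{L^p_v(\R)}=\int_{E_N}|x|^{-1}\,dx=4\log N$, so $f_N\in L^p_v(\R)$ for every $N$.

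The key estimate is the pointwise inequality
$$v(x/t)^{-1/p}\ge\lf(\inf_{y\in\R}\frac{v(ty)}{v(y)}\rh)^{1/p}v(x)^{-1/p},$$
which follows from the definition of the infimum by substituting $y=x/t$ in $v(ty)\ge(\inf_y v(ty)/v(y))v(y)$. Inserting it inside the integral defining $H_{\al,\phi}f_N(x)$ and using $|x/t|^{-1/p}=|t|^{1/p}|x|^{-1/p}$ yields, for every $x\in\R$,
$$H_{\al,\phi}f_N(x)\ge |x|^{-1/p}\,v(x)^{-1/p}\int_{\{t\,:\,x/t\in E_N\}}\frac{|\phi(t)|}{|t|^{2\al+2-1/p}}\lf(\inf_{y\in\R}\frac{v(ty)}{v(y)}\rh)^{1/p}dt.$$
The condition $x/t\in E_N$ amounts to $|x|/N<|t|<N|x|$, a range that exhausts $\R\setminus\{0\}$ as $N\to\infty$ for each fixed $x\ne 0$.

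To convert this pointwise bound into a norm estimate, I would fix a small $\delta\in(0,1)$ and restrict $x$ to the sub-annulus $E_{N^{1-\delta}}\subset E_N$; for such $x$ one has $|x|/N<N^{-\delta}$ and $N|x|>N^\delta$, so the $t$-range contains $\{N^{-\delta}<|t|<N^\delta\}$ and the previous inequality reduces to $H_{\al,\phi}f_N(x)\ge I_N\cdot f_N(x)$ with
$$I_N:=\int_{N^{-\delta}<|t|<N^\delta}\frac{|\phi(t)|}{|t|^{2\al+2-1/p}}\lf(\inf_{y\in\R}\frac{v(ty)}{v(y)}\rh)^{1/p}dt\;\longrightarrow\;A_{\inf}$$
by monotone convergence. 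Raising to the $p$-th power, integrating against $v(x)\,dx$ over $E_{N^{1-\delta}}$, and using $\int_{E_{N^{1-\delta}}}|x|^{-1}\,dx=4(1-\delta)\log N$ yields $\|H_{\al,\phi}f_N\|_{L^p_v(\R)}/\|f_N\|_{L^p_v(\R)}\ge(1-\delta)^{1/p}I_N$, whence letting $N\to\infty$ and then $\delta\to 0^+$ produces $\|H_{\al,\phi}\|\ge A_{\inf}$. The main care-point will be the measurability of $t\mapsto\inf_{y}v(ty)/v(y)$ (which can be arranged by interpreting the infimum as an essential infimum if needed) together with the convergence $I_N\to A_{\inf}$; both are routine once one notes $\bigcup_N\{N^{-\delta}<|t|<N^\delta\}=\R\setminus\{0\}$, with the case $A_{\inf}=+\infty$ handled by exactly the same inequality and the assumption that $H_{\al,\phi}$ is bounded.
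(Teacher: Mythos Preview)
Your proof is correct and uses essentially the same test functions as the paper (your $f_N$ is the paper's $f_u$ with $N=1/u$), but the route differs. The paper proceeds by duality: it pairs $f_u$ with a companion $g_u(x)=v^{1/p}(x)|x|^{-1/p'}\chi_{E_N}(x)$, bounds the bilinear form $J=\ent\frac{|\phi(t)|}{|t|^{2\al+2}}\ent f_u(x/t)g_u(x)\,dx\,dt$ above by $\|H_{\al,\phi}\|\,\|f_u\|_{L^p_v}\|g_u\|_{L^{p'}_{v^{1-p'}}}$, and then computes the inner integral $h_u(t)=\ent f_u(x/t)g_u(x)\,dx$ explicitly through a case-by-case analysis of the overlap sets $I_{t,u}$, obtaining factors like $4\log(1/u)\pm 2\log|t|$. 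You instead work directly with the numerator norm $\|H_{\al,\phi}f_N\|_{L^p_v}$: the pointwise lower bound $v(x/t)^{-1/p}\ge(\inf_y v(ty)/v(y))^{1/p}v(x)^{-1/p}$ lets you minorize $H_{\al,\phi}f_N(x)$ by $I_N\cdot|x|^{-1/p}v(x)^{-1/p}$ on the sub-annulus $E_{N^{1-\delta}}$, and the loss from restricting to the sub-annulus is just the factor $(1-\delta)^{1/p}$. This sub-annulus trick is what replaces the paper's explicit interval arithmetic, and it makes the argument shorter and avoids the eight-interval decomposition; on the other hand, the paper's duality setup is what generalizes most naturally to the two-dimensional version (their Theorem~3.3), where the analogous case analysis becomes sixteen cases but is otherwise mechanical. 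Both approaches share the same measurability caveat about $t\mapsto\inf_y v(ty)/v(y)$, which neither addresses rigorously.
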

	
\begin{proof}
Let us consider $0\le f\in L^p_v(\R)$ and $0\le g\in L^{p'}_{v^{1-p'}}(\R)$. On using Fubini's Theorem and H\"older's inequality, We have
	\begin{align}\label{K2}
	{J} & := \ent \frac{|\phi(t)|}{|t|^{2\al+2}}\lf( \ent f\lf(\frac{x}{t}\rh) g(x)dx\rh) dt\\
	& = \ent g(x)\lf(\ent \frac{|\phi(t)|}{|t|^{2\al+2}}f\lf(\frac{x}{t}\rh)dt\rh) dx\nonumber\\
	&\leq \ent |g(x)| \lf|(H_{\al,\phi} f )(x)\rh| dx \nonumber\\
	&= \ent |g(x)| v^{\frac{1}{p}}(x)v^{\frac{-1}{p}}(x)\lf|(H_{\al,\phi} f )(x)\rh|  dx \nonumber\\
	& \leq \lf(\ent \lf|\lf(H_{\al,\phi}f\rh)(x)\rh|^p v(x)  dx\rh)^{1/p}\lf(\ent |g(x)|^{p'} v^{\frac{-p'}{p}}(x)dx\rh)^{^{\frac{1}{p'}}}\nonumber \\
	& \leq \|H_{\al,\phi}\| \|f\|_{L^p_v(\R)}\|g\|_{L^{p'}_{v^{1-p'}}(\R)}.\label{h2.1}
	\end{align}
	For any interval $I=(a,b)$, $I'$ will denote the interval $(-b,-a)$. Now, for $u\in (0,1)$, let $I_1 = (u, 1/u)$ so that $I_1'=(-1/u,-u)$. Define the test functions
	$$f_u(x)=\frac{v^{-1/p}(x)}{|x|^{1/p}}\chi_{I_1\cup I'_1}(x), \qquad g_u(x)=\frac{v^{1/p}(x)}{|x|^{1/p'}}\chi_{I_1\cup I'_1}(x).$$
	Then it can be calculated that 
	\begin{equation}\label{h2.2}
	\|f_u\|_{L^p_v(\R)}^p=\|g_u\|_{L^{p'}_{v^{1-p'}}(\R)}^{p'}=4 \log(1/u).
	\end{equation}
	Also, we have 
	\begin{align}\label{2.2}
	h_u(t)
	&:= \ent f_u\lf(\frac{x}{t}\rh) g_u(x)dx\nonumber\\
	& = |t|^{1/p} \ent \frac{1}{|y|}\chi_{I_1\cup I'_1}(y)\chi_{I_1\cup I'_1}\lf(ty\rh) v^{-1/p}(y)v^{1/p}(ty) dy\nonumber\\
	&= |t|^{1/p}\ent \frac{1}{|y|}\chi_{I_{t,u}}(y)v^{-1/p}(y)v^{1/p}(ty) dy\nonumber\\
 &\geq \inf_{y \in \R} \lf(\frac{v(ty)}{v(y)}\rh)^{1/p} |t|^{1/p}\ent \frac{1}{|y|}\chi_{I_{t,u}}(y) dy
	\end{align}
	where
\begin{align*}
I_{t,u} &=
\begin{cases}
\lf(I_1\cup I'_1\rh)\cap(I_2\cup I'_2),& {\rm if}\, t\geq 0\\
\lf(I_1\cup I'_1\rh)\cap(I_3\cup I'_3), & {\rm if}\, t< 0
\end{cases}\\
&=
\begin{cases}
\lf(I_1\cap {I_2}\rh)\cup(I'_1\cap I'_2),& {\rm if}\, t\geq 0\\
\lf(I_1\cap I'_3\rh)\cup(I'_1\cap I_3), & {\rm if}\, t< 0
\end{cases}
\end{align*}
with $I_2=(\frac{u}{t}, \frac{1}{ut})$ and $I_3=(\frac{1}{tu}, \frac{u}{t})$. We divide $\R$ as
	$$\R=\lf(-\infty, -\frac{1}{u^2},\rh]\cup \lf(-\frac{1}{u^2}, -1\rh]\cup(-1, -u^2]\cup(-u^2, 0]\cup(0,u^2]\cup(u^2,1]\cup\lf(1,\frac{1}{u^2}\rh]\cup\lf(\frac{1}{u^2},\infty\rh).$$
	If $t\in \lf(-\infty, -\frac{1}{u^2},\rh]\cup[-u^2,u^2]\cup\lf[\frac{1}{u^2},\infty\rh)\cup\{-1,1\}$, then $I_{t,u}=\emptyset$, so that in this case
	\begin{equation}\label{h2.3}
	h_u(t)=0.
	\end{equation}
	If $t\in (-1/u^2,-1)$, then $I_{t,u} = \lf(u,-\frac{1}{tu}\rh)\cup\lf(\frac{1}{ut},{-u}\rh)$ and 
	\begin{equation}\label{e2.1}
	\ent \frac{1}{|y|}\chi_{I_{t,u}}dy=-2\log|t|+4\log \frac{1}{u} .
	\end{equation}
	If $t\in \lf(-1, -u^2\rh)$, then $I_{t,u} = \lf(-\frac{1}{u}, \frac{u}{t}\rh)\cup\lf(-\frac{u}{t}, \frac{1}{u}\rh)$ and we have
	\begin{equation}\label{e2.2}
	\ent \frac{1}{|y|}\chi_{I_{t,u}}dy=2\log|t|+4\log \frac{1}{u}.
	\end{equation}
	If $t\in (u^2,1)$, then $I_{t,u}=\lf(\frac{u}{t}, \frac{1}{u}\rh)\cup\lf(-\frac{1}{u}, -\frac{u}{t}\rh)$ and 
	\begin{equation}\label{h2.4}
	\ent \frac{1}{|y|}\chi_{I_{t,u}}dy=2\log|t|+4\log \frac{1}{u}.
	\end{equation}
	If $t\in \lf(1,\frac{1}{u^2}\rh)$, then $I_{t,u}= \lf(u, \frac{1}{tu}\rh)\cup\lf(-\frac{1}{tu}, -u\rh)$ and we have
	\begin{equation}\label{h2.5}
	\ent \frac{1}{|y|}\chi_{I_{t,u}}dy=-2\log|t|+4\log \frac{1}{u}.
	\end{equation}
	On taking $f$ and $g$ as $f_u$ and $g_u$ in (\ref{K2}) and then using \eqref{2.2}-\eqref{h2.5}, we obtain

	\begin{align}\label{h2.6}
{J} &= \lf(\int^{-1/u^2}_{-\infty}+\int_{-1/u^2}^{-1}+\int^{-u^2}_{-1}+\int^0_{-u^2}+\int_0^{u^2}+\int_{u^2}^1+\int_1^{1/u^2}+\int_{1/u^2}^\infty\rh) \frac{|\phi(t)|}{|t|^{2\al+2}} h_u(t) dt \nonumber\\
	& \geq 4 \log \frac{1}{u} \lf[\int^{-u^2}_{-1/u^2}\inf_{y \in \R} \lf(\frac{v(ty)}{v(y)}\rh)^{\frac{1}{p}}\frac{|\phi(t)|}{|t|^{{2\al+2}-\frac{1}{p}}}\lf(1-\frac{\xi(t)}{4\log \frac{1}{u}}\rh)dt\rh.\nonumber\\
	& \hskip 1.0 in \lf.+ \int^{1/u^2}_{u^2}\inf_{y \in \R} \lf(\frac{v(ty)}{v(y)}\rh)^{\frac{1}{p}}\frac{|\phi(t)|}{|t|^{{2\al+2}-\frac{1}{p}}}\lf(1-\frac{\xi(t)}{4\log \frac{1}{u}}\rh)dt\rh],
	\end{align}
			where 
	\begin{equation*}
	\xi(t)=
	\begin{cases}
	 2\log |t|,& {\rm if}\, t\in (-1/u^2, -1)\cup (1,1/u^2) \\
-2\log |t| , & {\rm if}\, t \in (-1, -u^2)\cup (u^2, 1).	
	\end{cases}	
	\end{equation*}
	Now, using \eqref{h2.6} and \eqref{h2.2} in \eqref{h2.1} for $f=f_u$ and $g=g_u$, we get
	\begin{align*}
	&\int^{-u^2}_{-1/u^2}\inf_{y \in \R} \lf(\frac{v(ty)}{v(y)}\rh)^{\frac{1}{p}}\frac{|\phi(t)|}{|t|^{{2\al+2}-\frac{1}{p}}}\lf(1-\frac{\xi(t)}{4\log \frac{1}{u}}\rh)dt \\
	& \qquad + \int^{1/u^2}_{u^2}\inf_{y \in \R} \lf(\frac{v(ty)}{v(y)}\rh)^{\frac{1}{p}}\frac{|\phi(t)|}{|t|^{{2\al+2}-\frac{1}{p}}}\lf(1-\frac{\xi(t)}{4\log \frac{1}{u}}\rh)dt \leq \|H_{\al,\phi}\|.
	\end{align*}
	By the Monotone Convergence Theorem, the LHS $\displaystyle \uparrow A_{\inf}$ as $u\rightarrow 0$ and we are done.
\end{proof}

In view of Theorems \ref{t2.1} and \ref{t2.2}, a characterization for the boundedness of $H_{\al,\phi}:L^p_v(\R)\to L^p_v(\R)$ can be derived. In fact, the following is immediate:

\begin{theorem}\label{t2.3}
Let $1<p<\infty$, $v$ be weight function and $\phi\in L^1(\R)$. Let the following be satisfied for some constant $c>0$: 
$$
\sup_{y\in\R}\frac{v(ty)}{v(y)}\le c \inf_{y\in\R}\frac{v(ty)}{v(y)}.
$$ 
Then the operator $H_{\al,\phi}:L^p_v(\R)\to L^p_v(\R)$ is  bounded if and only if $A_{\sup} <\infty$ and moreover the following estimates hold:
\begin{equation*}
{1\over c} A_{\sup} \le \|H_{\al,\phi}\|_{L^p_v(\R)\to L^p_v(\R)}\le A_{\sup}.
\end{equation*}
\end{theorem}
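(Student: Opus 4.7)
The plan is to deduce this theorem as a straightforward corollary of the two previous theorems, with the weight comparison hypothesis serving as the bridge between the quantities $A_{\sup}$ and $A_{\inf}$. The upper estimate is literally the content of Theorem \ref{t2.1}, so the only genuine task is to see that a two-sided control by $A_{\sup}$ is consistent with the lower bound furnished by Theorem \ref{t2.2}.

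First, I would exploit the assumption on $v$ pointwise in $t$: since $\sup_{y}v(ty)/v(y)\le c\inf_{y}v(ty)/v(y)$, taking $1/p$-th powers gives
$$\lf(\sup_{y\in\R}\frac{v(ty)}{v(y)}\rh)^{1/p}\le c^{1/p}\lf(\inf_{y\in\R}\frac{v(ty)}{v(y)}\rh)^{1/p}.$$
Multiplying by $|\phi(t)|/|t|^{2\al+2-1/p}$ and integrating over $\R$, the definitions of $A_{\sup}$ and $A_{\inf}$ yield $A_{\sup}\le c^{1/p}A_{\inf}$. Since necessarily $c\ge 1$ (the supremum is never strictly smaller than the infimum), in particular $A_{\sup}\le c\,A_{\inf}$.

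Now the two implications of the characterization fall out. If $A_{\sup}<\infty$, Theorem \ref{t2.1} gives boundedness of $H_{\al,\phi}$ on $L^p_v(\R)$ together with the upper estimate $\|H_{\al,\phi}\|_{L^p_v(\R)\to L^p_v(\R)}\le A_{\sup}$. Conversely, if $H_{\al,\phi}$ is bounded, Theorem \ref{t2.2} yields $A_{\inf}\le \|H_{\al,\phi}\|_{L^p_v(\R)\to L^p_v(\R)}$, and combining this with the inequality just obtained gives $A_{\sup}\le c\,A_{\inf}\le c\,\|H_{\al,\phi}\|_{L^p_v(\R)\to L^p_v(\R)}<\infty$, which both proves the finiteness of $A_{\sup}$ and supplies the lower estimate $\tfrac{1}{c}A_{\sup}\le\|H_{\al,\phi}\|_{L^p_v(\R)\to L^p_v(\R)}$.

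There is really no obstacle here; the theorem is bookkeeping of the previous two results. The only point worth mentioning is that a slightly sharper constant $c^{-1/p}$ is in fact available in the lower bound, but the weaker $c^{-1}$ quoted in the statement suffices and may be preferred for its independence of $p$.
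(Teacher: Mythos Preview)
Your proof is correct and mirrors exactly the paper's approach: the paper offers no separate proof of this theorem, stating only that it is immediate from Theorems \ref{t2.1} and \ref{t2.2}, and you have simply written out that immediate deduction. Your remark that the sharper constant $c^{-1/p}$ is actually obtained is a valid observation not made explicit in the paper.
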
 

\begin{corollary}\label{t2.4} Let $1<p<\infty$, $v$ be weight function and $\phi\in L^1(\R)$. If there exists a function $h$ such that $v(xy)=v(x)h(y)$, then $A_{\sup} = A_{\inf}$ and
\begin{equation*}
\|H_{\al,\phi}\|_{L^p_v(\R)\to L^p_v(\R)} = \ent \frac{|\phi(t)|}{{|t|^{2\al+2-\frac{1}{p}}}}h^{1/p}(t)\, dt.
\end{equation*}
	
\end{corollary}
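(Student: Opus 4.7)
The plan is to observe that the hypothesis $v(xy)=v(x)h(y)$ makes the ratio $v(ty)/v(y)$ independent of $y$, collapsing $A_{\sup}$ and $A_{\inf}$ to a single explicit integral, and then to invoke Theorem \ref{t2.3} (or equivalently the two bounds from Theorems \ref{t2.1} and \ref{t2.2}).

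First I would specialise the functional equation by writing $v(t\cdot y)=v(y)h(t)$ (viewing $y$ as the first argument and $t$ as the second), which yields
$$
\frac{v(ty)}{v(y)}=h(t)\qquad\text{for a.e. }y\in\R.
$$
Since the right-hand side does not depend on $y$, one has immediately
$$
\sup_{y\in\R}\frac{v(ty)}{v(y)}=\inf_{y\in\R}\frac{v(ty)}{v(y)}=h(t),
$$
so in particular the hypothesis of Theorem \ref{t2.3} is satisfied with $c=1$, and
$$
A_{\sup}=A_{\inf}=\ent\frac{|\phi(t)|}{|t|^{2\al+2-\frac{1}{p}}}\,h^{1/p}(t)\,dt.
$$

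Applying Theorem \ref{t2.3} (or, equivalently, combining the upper bound $\|H_{\al,\phi}\|\le A_{\sup}$ from Theorem \ref{t2.1} with the lower bound $\|H_{\al,\phi}\|\ge A_{\inf}$ from Theorem \ref{t2.2}) then gives the desired identity
$$
\|H_{\al,\phi}\|_{L^p_v(\R)\to L^p_v(\R)}=\ent\frac{|\phi(t)|}{|t|^{2\al+2-\frac{1}{p}}}\,h^{1/p}(t)\,dt.
$$

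There is essentially no obstacle: the only delicate point is that the functional equation and the positivity/finiteness of $v$ almost everywhere should be interpreted in the a.e. sense, and the $\sup$ and $\inf$ in the statements of Theorems \ref{t2.1} and \ref{t2.2} are likewise essential suprema and infima, which is consistent with replacing them by the $y$-independent value $h(t)$.
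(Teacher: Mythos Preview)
Your argument is correct and is exactly the intended one: the paper states this corollary immediately after Theorem \ref{t2.3} without proof, and the implicit reasoning is precisely that $v(ty)/v(y)=h(t)$ is independent of $y$, so $A_{\sup}=A_{\inf}$ and Theorem \ref{t2.3} applies with $c=1$.
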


For $\al=-1/2$ and $\phi(t)={1\over t}\chi_{(1,\infty)}(t)$, as pointed out earlier, the operator $H_{\al,\phi}$ becomes the Hardy averaging operator
$$
Hf(x)={1\over x}\int_0^x f(t)\, dt.
$$
Further, if we take $v(t)=t^\beta, \, \beta<p-1$, then
$\displaystyle A_{\sup}=\frac{p}{\beta-p-1}$ and
$$
\|H\|_{L^p_{t^\beta}(\R)\to L^p_{t^\beta}(\R)} = \frac{p}{\beta-p+1}.
$$
The above discussion leads to the following corollary which, in fact, is the classical Hardy inequality (see e.g. \cite[Theorem 330]{HLP}, \cite[(3.6) p. 23]{KMP} or \cite[Theorem 6 p. 726]{KMP2}):

\begin{corollary}
	Let $1<p<\infty$ and $\beta<p-1$ be a the weight function. Then the inequality
$$
\|Hf\|_{L^p_{x^\beta}(\R)} \le \left(\frac{p}{\beta-p+1}  \right) \|f\|_{L^p_{x^\beta}(\R)}
$$
holds and the constant $ \left(\frac{p}{\beta-p+1}  \right)$ is sharp.
\end{corollary}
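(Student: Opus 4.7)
The proof is essentially a direct application of Corollary \ref{t2.4}, so the plan is to verify that all its hypotheses are met for the specific choices of $\al$, $\phi$ and $v$ that produce the classical Hardy operator with power weight, and then to carry out the resulting integral computation.

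First I would note that, as already observed in the discussion preceding the statement, the choices $\al=-1/2$ and $\phi(t)=t^{-1}\chi_{(1,\infty)}(t)$ reduce the Dunkl-Hausdorff operator $H_{\al,\phi}$ to the ordinary Hardy averaging operator $H$; indeed, with these parameters $|t|^{2\al+2}=|t|$, so that a change of variable $s=x/t$ in the defining integral yields $Hf(x)=\frac{1}{x}\int_0^x f(s)\,ds$. Next I would verify that the weight $v(x)=|x|^{\beta}$ satisfies the multiplicative factorization demanded by Corollary \ref{t2.4}: one has $v(xy)=|x|^{\beta}|y|^{\beta}=v(x)h(y)$ with $h(y)=|y|^{\beta}$, so the corollary applies and yields
$$
\|H\|_{L^p_{|x|^{\beta}}(\R)\to L^p_{|x|^{\beta}}(\R)}=\ent\frac{|\phi(t)|}{|t|^{2\al+2-1/p}}h^{1/p}(t)\,dt.
$$

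The remaining step is the explicit evaluation of this integral. Since $\phi$ is supported in $(1,\infty)$ and $2\al+2-1/p=1-1/p$, the integral collapses to
$$
\int_1^{\infty}\frac{1}{t}\cdot t^{-1+1/p}\cdot t^{\beta/p}\,dt=\int_1^{\infty}t^{-2+(1+\beta)/p}\,dt,
$$
which converges precisely under the hypothesis $\beta<p-1$ and evaluates to $\frac{p}{p-1-\beta}$. This immediately gives the claimed bound and, since Corollary \ref{t2.4} produces the exact operator norm rather than a mere upper estimate, the sharpness of the constant is automatic, so no separate extremizer argument is needed.

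There is no real obstacle in this proof beyond bookkeeping: the only subtle point is checking that the substitution converting $H_{\al,\phi}$ into $H$ really does respect the weighted norm, but since $v(x)=|x|^{\beta}$ is even and the substitution is purely in the variable of integration defining the operator (not in the outer variable $x$ or the measure $v(x)\,dx$), no issue arises. The minor cosmetic matter is that the denominator appearing in the statement should be read as $p-1-\beta>0$ in view of $\beta<p-1$.
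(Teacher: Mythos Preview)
Your proposal is correct and follows exactly the route the paper takes: the discussion immediately preceding the corollary in the paper specializes Corollary~\ref{t2.4} to $\al=-1/2$, $\phi(t)=t^{-1}\chi_{(1,\infty)}(t)$ and $v(t)=t^{\beta}$, and then computes the resulting integral, just as you do. Your observation that the constant should be read as $p/(p-1-\beta)$ is also in line with the paper's own computation (the sign in the displayed statement is a typographical slip).
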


\begin{remark}
	\begin{itemize} 
	\item [(i)] When $p=1$ and $v(x)=|x|^{2\al +1}$, Theorem \ref{t2.1} reduces to (\cite{daher}, Theorem 3.1).
	
	\item [(ii)] When $v(x)
=|x|^{2\al +1}$, Theorem \ref{t2.1} reduces to (\cite{daher2}, Theorem 1)	

\item [(iii)] When $\al=-1/2$, Theorems \ref{t2.1}, \ref{t2.2}, \ref{t2.3} and Corollary \ref{t2.4} reduce to (\cite{gorka}, Theorems 1(i), 1(ii), Corollaries 1 and 2, respectively for $w=v$). Moreover, the functions here are defined on $\R$ unlike in \cite{gorka} where the domain is $\R^+$.
\end{itemize}
	
\end{remark}


\section{Two dimensional case}

In this section, we derive the two dimensional analogues of the results proved in Section 2.

\begin{definition}
For $\phi\in L^1(\R^2)$, the two dimensional Dunkl-Hausdorff operator  is defined by
$$\sH_{\al, \phi}(f(x_1,x_2)) = \et \frac{|\phi(t_1,t_2)|}{|t_1t_2|^{2\al+2}}f\lf(\frac{x_1}{t_1},\frac{x_2}{t_2}\rh)dt_1dt_2.$$
\end{definition}

Now, we prove the boundedness of $\sH_{\al, \phi}$. By using generalised Minkowski inequality, change of variables and H\"older's inequality in two dimensions, the following theorem can be proved along the same lines as in Theorem \ref{t2.1}.

\begin{theorem}\label{t3.2}
	Let $1<p<\infty$, $v$ be weight function and $\phi\in L^1(\R^2)$ be such that $$\displaystyle \et \frac{|\phi(t_1, t_2)|}{{|t_1t_2|^{2\al+2-\frac{1}{p}}}}\lf(\sup_{(y_1, y_2)\in\R^2}\frac{v(t_1y_1, t_2y_2)}{v(y_1, y_2)}\rh)^{1/p} dt_1dt_2 =:\sA_{\sup}<\infty.$$ Then the operator $\sH_{\al,\phi}:L^p_v(\R^2)\to L^p_v(\R^2)$ is bounded and 
	$$
	\|\sH_{\al, \phi}f\|_{L^p_v(\R^2)}\leq \sA_{\sup} \|f\|_{L^{p}_{v}(\R^2)}.
	$$
\end{theorem}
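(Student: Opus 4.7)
The plan is to mimic the one-dimensional argument in Theorem \ref{t2.1} step by step, with the single real variable replaced throughout by the pair $(t_1,t_2)$ (and similarly $(x_1,x_2)$, $(y_1,y_2)$), while keeping careful track of the Jacobian $|t_1 t_2|$ that arises in place of $|t|$. I would start by writing
$$
\|\sH_{\al,\phi}f\|_{L^p_v(\R^2)} = \lf(\et |\sH_{\al,\phi} f(x_1,x_2)|^p v(x_1,x_2)\,dx_1 dx_2\rh)^{1/p}
$$
and inserting the definition of $\sH_{\al,\phi}$.

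Next, I would apply the two-variable generalised Minkowski inequality to interchange the outer $L^p(v(x_1,x_2)\,dx_1dx_2)$ norm with the $dt_1 dt_2$ integration, pulling the factor $\frac{|\phi(t_1,t_2)|}{|t_1 t_2|^{2\al+2}}$ outside the norm. The inner quantity is then the $L^p$ norm (in $x$, against $v$) of $f(x_1/t_1, x_2/t_2)$. Performing the change of variables $y_i = x_i/t_i$, $i=1,2$, produces the Jacobian $|t_1 t_2|$, which after taking the $p$-th root contributes a factor $|t_1 t_2|^{1/p}$; combined with the denominator $|t_1 t_2|^{2\al+2}$, this yields exactly the weight $|t_1 t_2|^{2\al+2-1/p}$ appearing in the definition of $\sA_{\sup}$.

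The last step is to split $v(t_1 y_1, t_2 y_2) = v(y_1,y_2) \cdot \dfrac{v(t_1 y_1, t_2 y_2)}{v(y_1,y_2)}$ inside the remaining $y$-integral, bound the ratio pointwise by $\sup_{(y_1,y_2)\in\R^2} \frac{v(t_1 y_1, t_2 y_2)}{v(y_1,y_2)}$, factor this $(t_1,t_2)$-dependent supremum out of the $y$-integral, and recognise what remains as $\|f\|_{L^p_v(\R^2)}^p$. Collecting the $(t_1,t_2)$ factors gives precisely $\sA_{\sup}\|f\|_{L^p_v(\R^2)}$.

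There is no real obstacle here; the proof is essentially bookkeeping, and the author's remark that it ``can be proved along the same lines as Theorem \ref{t2.1}'' is accurate. The only point requiring a small amount of care is the $2$-D Jacobian producing $|t_1 t_2|$ (rather than $|t|$) so that the exponents on $|t_1 t_2|$ combine exactly as in the $\sA_{\sup}$ integrand, and the application of the generalised Minkowski inequality in the product measure setting, which is standard.
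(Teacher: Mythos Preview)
Your proposal is correct and matches the paper's approach exactly: the paper does not give a detailed proof of this theorem but simply states that it ``can be proved along the same lines as in Theorem~\ref{t2.1}'' using the generalised Minkowski inequality and change of variables in two dimensions, which is precisely the outline you have written. Your attention to the Jacobian $|t_1t_2|$ producing the correct exponent $2\al+2-\tfrac{1}{p}$ is the only nontrivial bookkeeping point, and you have handled it correctly.
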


Towards the converse of Theorem \ref{t3.2}, we prove the following:
\begin{theorem}\label{t3.3}
	Let $1<p<\infty$, $v$ be weight function and $\phi\in L^1(\R^2)$. If the operator $\sH_{\al,\phi}:L^p_v(\R^2)\to L^p_v(\R^2)$ is bounded, then $$\|\sH_{\al, \phi}\|\geq \et \frac{|\phi(t_1, t_2)|}{{|t_1t_2|^{2\al+2-\frac{1}{p}}}}\lf(\inf_{(y_1 y_2)\in\R^2}\frac{v(t_1y_1, t_2y_2)}{v(y_1,y_2)}\rh)^{1/p} dt_1dt_2=:\sA_{\inf}.$$
\end{theorem}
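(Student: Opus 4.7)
The plan is to mimic the one-dimensional proof of Theorem \ref{t2.2}, exploiting the tensor product structure of the test functions to reduce the weight-free factors to products of the 1D integrals already computed there. For nonnegative $f\in L^p_v(\R^2)$ and $g\in L^{p'}_{v^{1-p'}}(\R^2)$, I would apply Fubini's theorem and H\"older's inequality to obtain
$$J := \et \frac{|\phi(t_1,t_2)|}{|t_1t_2|^{2\al+2}}\lf(\et f\lf(\frac{x_1}{t_1},\frac{x_2}{t_2}\rh) g(x_1,x_2)\,dx_1dx_2\rh)dt_1dt_2 \le \|\sH_{\al,\phi}\|\,\|f\|_{L^p_v(\R^2)}\,\|g\|_{L^{p'}_{v^{1-p'}}(\R^2)}.$$

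For $u\in(0,1)$ and $I_1=(u,1/u)$, the natural test functions are the tensor products
$$f_u(x_1,x_2) = \frac{v^{-1/p}(x_1,x_2)}{|x_1x_2|^{1/p}}\chi_{(I_1\cup I_1')^2}(x_1,x_2), \qquad g_u(x_1,x_2) = \frac{v^{1/p}(x_1,x_2)}{|x_1x_2|^{1/p'}}\chi_{(I_1\cup I_1')^2}(x_1,x_2),$$
for which a direct computation analogous to \eqref{h2.2} gives $\|f_u\|_{L^p_v(\R^2)}^p = \|g_u\|_{L^{p'}_{v^{1-p'}}(\R^2)}^{p'} = 16\log^2(1/u)$. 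The substitution $y_i=x_i/t_i$ inside the inner integral, pulling out the infimum of $v(t_1y_1,t_2y_2)/v(y_1,y_2)$ over $(y_1,y_2)\in\R^2$, and applying Fubini in the $y$-variables, yields the factorized lower bound
$$\et f_u\lf(\frac{x_1}{t_1},\frac{x_2}{t_2}\rh) g_u(x_1,x_2)\,dx_1dx_2 \ge \inf_{(y_1,y_2)\in\R^2}\lf(\frac{v(t_1y_1,t_2y_2)}{v(y_1,y_2)}\rh)^{1/p}|t_1t_2|^{1/p}\prod_{i=1}^{2}\ent \frac{\chi_{I_{t_i,u}}(y_i)}{|y_i|}\,dy_i,$$
with each $I_{t_i,u}$ exactly the 1D set from the proof of Theorem \ref{t2.2}.

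Applying the 1D evaluations \eqref{h2.3}--\eqref{h2.5} coordinatewise, each factor vanishes unless $t_i\in(-1/u^2,-u^2)\cup(u^2,1/u^2)$, in which case it equals $4\log(1/u)-\xi(t_i)$ for the same piecewise-logarithmic $\xi$ of Theorem \ref{t2.2}. Inserting this into the dual inequality for $J$ and dividing by $16\log^2(1/u)$, I would arrive at
$$\iint_{\Omega_u}\inf_{(y_1,y_2)\in\R^2}\lf(\frac{v(t_1y_1,t_2y_2)}{v(y_1,y_2)}\rh)^{1/p}\frac{|\phi(t_1,t_2)|}{|t_1t_2|^{2\al+2-1/p}}\prod_{i=1}^{2}\lf(1-\frac{\xi(t_i)}{4\log(1/u)}\rh)dt_1dt_2 \le \|\sH_{\al,\phi}\|,$$
where $\Omega_u=\bigl((-1/u^2,-u^2)\cup(u^2,1/u^2)\bigr)^2$. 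Each correction factor lies in $[0,1]$ and increases monotonically to $1$ as $u\to 0^+$, while $\Omega_u$ exhausts $\R^2$ up to a null set, so the monotone convergence theorem delivers $\sA_{\inf}\le\|\sH_{\al,\phi}\|$.

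The main technical obstacle is the factorization step: verifying that the effective support produced by the substitution really splits as $I_{t_1,u}\times I_{t_2,u}$, so that the piecewise bookkeeping of $\xi$ from the 1D proof can be transplanted coordinatewise into a product without interference. Once that is in hand, the dual pairing, the norm computation for the tensor-product test functions, and the monotone limit all carry over essentially verbatim from the one-dimensional argument.
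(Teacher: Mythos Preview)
Your proposal is correct and follows the same strategy as the paper: the same duality estimate, the same tensor-product test functions $f_u,g_u$, the same norm computation $\|f_u\|_{L^p_v}^p=\|g_u\|_{L^{p'}_{v^{1-p'}}}^{p'}=(4\log(1/u))^2$, and the same monotone-convergence finish. Your worry about the factorization step is unfounded: since $(I_1\cup I_1')^2$ is a Cartesian product and the weight-free integrand $|y_1y_2|^{-1}$ separates, the constraint set $\{(y_1,y_2):y_i\in I_1\cup I_1',\ t_iy_i\in I_1\cup I_1'\}$ is exactly $I_{t_1,u}\times I_{t_2,u}$, and Tonelli gives the product of the two one-dimensional integrals immediately. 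The paper does not exploit this and instead enumerates sixteen cases according to which of the four intervals $(-1/u^2,-1)$, $(-1,-u^2)$, $(u^2,1)$, $(1,1/u^2)$ each $t_i$ lies in, writing out $I_{t_1,t_2,u}$ and evaluating the double integral in every case; each answer is, of course, precisely the product of the two values from \eqref{e2.1}--\eqref{h2.5}. Your route is shorter and conceptually cleaner, but otherwise identical.
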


\begin{proof}
Let $0\le f\in L^p_v(\R^2)$ and $0\le g\in L^{p'}_{v^{1-p'}}(\R^2)$. On using Fubini's Theorem and H\"older's inequality, We have
	\begin{align}\label{h0}
	\mathcal{J}& := \et \frac{|\phi(t_1, t_2)|}{|t_1t_2|^{2\al+2}} \et f\lf(\frac{x_1}{t_1},\frac{x_2}{t_2}\rh) g(x_1,x_2)dx_1dx_2 dt_1dt_2\\
	& = \et g(x_1,x_2) \lf(\et \frac{|\phi(t_1, t_2)|}{|t_1t_2|^{2\al+2}}f\lf(\frac{x_1}{t_1},\frac{x_2}{t_2}\rh)dt_1dt_2\rh) dx_1dx_2\nonumber\\
	&\leq \et |g(x_1,x_2)| \lf|\lf(\sH_{\al, \phi} f \rh)(x_1,x_2)\rh|  dx_1dx_2\nonumber\\
	& \leq \lf(\et \lf|\lf(\sH_{\al, \phi}f\rh)(x_1,x_2)\rh|^p v(x_1, x_2)  dx_1dx_2\rh)^{1/p}\lf(\ent |g(x_1,x_2)|^{p'} v(x_1, x_2) ^{\frac{-p'}{p}} dx_1dx_2\rh)^{1/p'}\nonumber \\
	& \leq \|\sH_{\al, \phi}\| \|f\|_{L^p_v(\R^2)}\|g\|_{L^{p'}_{v^{1-p'}}(\R^2)}.\label{h2.1.1}
	\end{align}
	Now, for $u\in (0,1)$, we define the test functions
	\begin{align*}
	f_u(x_1,x_2)&=\frac{v^{-1/p}(x_1,x_2)}{|x_1x_2|^{\frac{1}{p}}}\chi_{I_1\cup I'_1\times I_1\cup I'_1}(x_1,x_2),\\
	 g_u(x_1,x_2)&=\frac{v^{1/p}(x_1,x_2)}{|x_1x_2|^{\frac{1}{p'}}}\chi_{I_1\cup I'_1\times I_1\cup I'_1}(x_1,x_2),
	 \end{align*}
	where $I_1 = (u, 1/u)$ and as before $I'_1 = (-1/u, -u)$. Then it can be calculated that 
	\begin{equation}\label{h2.1.2}
	\|f_u\|_{{L^p_v}(\R^2)}^p=\|g_u\|_{L^{p'}_{{v^{1-p'}}}(\R^2)}^{p'}=(4 \log(1/u))^2.
	\end{equation}
	Also, on taking ${x_i}/{t_i}= y_i$ for $ i = 1,2$, we have 
	\begin{align*}
	h_u(t_1,t_2)
	&:= \et f_u\lf(\frac{x_1}{t_1}, \frac{x_2}{t_2}\rh) g_u(x_1,x_2)dx_1dx_2\\
	& = |t_1t_2|^{\frac{1}{p}} \et \frac{1}{|y_1y_2|}\chi_{I_1\cup I'_1\times I_1\cup I'_1}(y_1,y_2)v^{-\frac{1}{p}}(y_1,y_2)v^{\frac{1}{p}}(t_1y_1,t_2y_2)\chi_{I_1\cup I'_1\times I_1\cup I'_1}(t_1y_1,t_2y_2)dy_1dy_2\\
	&\geq \inf_{(y_1,y_2)\in\R^2}\frac{v^{\frac{1}{p}}(t_1y_1,t_2y_2)}{v^{\frac{1}{p}}(y_1,y_2)} |t_1t_2|^{\frac{1}{p}}\et \frac{1}{|y_1y_2|}\chi_{I_{t_1,t_2,u}}(y_1,y_2)dy_1dy_2,
	\end{align*}
	where 
		\begin{align*}
	I_{t_1,t_2,u}&= 
	\begin{cases}
	 \lf(I_1\cup I'_1\rh) \cap\lf(I_2\cup I'_2\rh)\times \lf(I_1\cup I'_1\rh) \cap\lf(I_3\cup I'_3\rh), & {\rm if}\, t_1, t_2>0\\
\lf(I_1\cup I'_1\rh) \cap\lf(I_4\cup I'_4\rh)\times \lf(I_1\cup I'_1\rh) \cap\lf(I_5\cup I'_5\rh) , & {\rm if}\,  t_1, t_2<0\\
	\lf(I_1\cup I'_1\rh) \cap\lf(I_2\cup I'_2\rh)\times \lf(I_1\cup I'_1\rh) \cap\lf(I_5\cup I'_5\rh) , & {\rm if}\, t_1>0, t_2<0\\
	\lf(I_1\cup I'_1\rh) \cap\lf(I_4\cup I'_4\rh)\times \lf(I_1\cup I'_1\rh) \cap\lf(I_3\cup I'_3\rh) , & {\rm if}\, t_1<0, t_2>0.
	\end{cases}	\\	
&= 
	\begin{cases}
	 \lf(I_1\cap {I}_2\rh) \cup\lf(I'_1\cap I'_2\rh)\times \lf(I_1\cap I_3\rh) \cup\lf(I'_1\cap I'_3\rh), & {\rm if}\, t_1, t_2>0\\
\lf(I_1\cap I'_4\rh) \cup\lf(I'_1\cap I_4\rh)\times \lf(I_1\cap I'_5\rh) \cup\lf(I'_1\cap {I}_5\rh) , & {\rm if}\,  t_1, t_2<0\\
	\lf(I_1\cap {I}_2\rh) \cup\lf(I'_1\cap I'_2\rh)\times \lf(I_1\cap I'_5\rh) \cup\lf(I'_1\cap {I}_5\rh)  , & {\rm if}\, t_1>0, t_2<0\\
	\lf(I_1\cap I'_4\rh) \cup\lf(I'_1\cap I_4\rh)\times \lf(I_1\cap I_3\rh) \cup\lf(I'_1\cap I'_3\rh) , & {\rm if}\, t_1<0, t_2>0,
	\end{cases}	
	\end{align*}
with	
	$$
	I_2= \lf(\frac{u}{t_1}, \frac{1}{u t_1}\rh)\qquad I_3= \lf(\frac{u}{t_2}, \frac{1}{u t_2}\rh)\qquad I_4= \lf(\frac{1}{u t_1},\frac{u}{t_1}\rh)\qquad I_5=\lf(\frac{1}{u t_2}, \frac{u}{t_2}\rh).
	$$
It is observed that if $t_1\in (-\infty, -\frac{1}{u^2}]\cup [-u^2,u^2]\cup\lf[\frac{1}{u^2},\infty\rh)\cup\{-1, 1\}$ and $t_2\in (-\infty, \infty)$, then 
	$I_{t_1,t_2,u} = \emptyset$
	and therefore, in this case $h_u(t_1,t_2)=0$. The same is the situation if $t_2\in(-\infty, -\frac{1}{u^2}]\cup [-u^2,u^2]\cup\lf[\frac{1}{u^2},\infty\rh)\cup\{-1, 1\}$ and $t_1\in (-\infty, \infty)$, then  $h_u(t_1,t_2)=0$.
	We deal with the remaining cases as follows:
\smallskip

	\noindent Case 1 : $t_1,t_2\in(u^2,1)$. In this case, it can be worked out that 
	\begin{equation*}
	I_{t_1,t_2,u} = I_6\cup I'_6 \times I_7\cup I'_7, \quad \text{where } I_6= \lf(\frac{u}{t_1}, \frac{1}{u}\rh ),\quad I_7= \lf(\frac{u}{t_2}, \frac{1}{u}\rh )
	\end{equation*}
	and therefore,
	\begin{equation*}
	\et \frac{1}{|y_1y_2|}\chi_{I_{t_1,t_2,u}} (y_1,y_2)dy_1dy_2=\lf(4\log \frac{1}{u}\rh)^2\lf(1-\frac{\log |\frac{1}{t_1}|}{2\log \frac{1}{u}}\rh)\lf(1-\frac{\log |\frac{1}{t_2}|}{2\log \frac{1}{u}}\rh).
	\end{equation*}

	\noindent Case 2 : $t_1\in (1,\frac{1}{u^2})$, $t_2\in (u^2,1)$. In this case
	\begin{equation*}
	I_{t_1,t_2,u} = I_8\cup I'_8\times I_7\cup I'_7, \qquad \text{where } I_8=\lf(u, \frac{1}{ut_1}\rh)
	\end{equation*}
	so that
	\begin{equation*}
	\et \frac{1}{|y_1y_2|}\chi_{I_{t_1,t_2,u}}(y_1,y_2)dy_1dy_2=\lf(4\log \frac{1}{u}\rh)^2\lf(1-\frac{\log |t_1|}{2\log \frac{1}{u}}\rh)\lf(1-\frac{\log |\frac{1}{t_2}|}{2\log \frac{1}{u}}\rh).
	\end{equation*}

	\noindent Case 3 : $t_1\in (u^2,1)$, $t_2\in (1,\frac{1}{u^2})$. In this case
	\begin{equation*}
		I_{t_1,t_2,u} = I_6\cup I'_6 \times I_9\cup I'_9, \qquad \text{where } I_9=\lf(u, \frac{1}{ut_2}\rh)
\end{equation*}
	so that
	\begin{equation*}
	\et \frac{1}{|y_1y_2|}\chi_{I_{t_1,t_2,u}}(y_1,y_2)dy_1dy_2=\lf(4\log \frac{1}{u}\rh)^2\lf(1-\frac{\log |\frac{1}{t_1}|}{2\log \frac{1}{u}}\rh)\lf(1-\frac{\log |t_2|}{2\log \frac{1}{u}}\rh).
	\end{equation*}

	\noindent Case 4 : $t_1,t_2\in (1,\frac{1}{u^2})$. In this case 
	\begin{equation*}
	I_{t_1,t_2,u} = I_8\cup I'_8  \times I_9\cup I'_9
	\end{equation*}
	so that
	\begin{equation*}
	\et \frac{1}{|y_1y_2|}\chi_{I_{t_1,t_2,u}} (y_1,y_2)dy_1dy_2=\lf(4\log \frac{1}{u}\rh)^2\lf(1-\frac{\log |t_1|}{2\log \frac{1}{u}}\rh)\lf(1-\frac{\log |t_2|}{2\log \frac{1}{u}}\rh).
	\end{equation*}
	
	\noindent Case 5 : $t_1\in (-\frac{1}{u^2}, -1) ,t_2\in(u^2,1)$. In this case, it can be worked out that 
	\begin{equation*}
	I_{t_1,t_2,u} = I_{10}\cup I'_{10} \times I_7\cup I'_7, \qquad \text{where } I_{10}= \lf(u, -\frac{1}{ut_1}\rh )
	\end{equation*}
	and therefore,
	\begin{equation*}
	\et \frac{1}{|y_1y_2|}\chi_{I_{t_1,t_2,u}}(y_1,y_2)dy_1dy_2=\lf(4\log \frac{1}{u}\rh)^2\lf(1-\frac{\log |t_1|}{2\log \frac{1}{u}}\rh)\lf(1-\frac{\log |\frac{1}{t_2}|}{2\log \frac{1}{u}}\rh).
	\end{equation*}
	
	\noindent Case 6 : $t_1\in (-1,-u^2)$, $t_2\in (u^2,1)$. In this case
	\begin{equation*}
	I_{t_1,t_2,u}=I_{11}\cup I'_{11} \times I_7\cup I'_7, \qquad \text{where } I_{11}=\lf(-\frac{u}{t_1}, \frac{1}{u}\rh)
	\end{equation*}
	so that
	\begin{equation*}
\et \frac{1}{|y_1y_2|}\chi_{I_{t_1,t_2,u}}(y_1,y_2)dy_1dy_2=\lf(4\log \frac{1}{u}\rh)^2\lf(1-\frac{\log |\frac{1}{t_1}|}{2\log \frac{1}{u}}\rh)\lf(1-\frac{\log |\frac{1}{t_2}|}{2\log \frac{1}{u}}\rh).
	\end{equation*}
	
	\noindent Case 7 : $t_1\in (-\frac{1}{u^2},-1)$, $t_2\in (1,\frac{1}{u^2})$. In this case
	\begin{equation*}
		I_{t_1,t_2,u} = I_{10}\cup I'_{10} \times I_9\cup I'_9 
\end{equation*}
	so that
	\begin{equation*}
\et \frac{1}{|y_1y_2|}\chi_{I_{t_1,t_2,u}} (y_1,y_2)dy_1dy_2=\lf(4\log \frac{1}{u}\rh)^2\lf(1-\frac{\log |t_1|}{2\log \frac{1}{u}}\rh)\lf(1-\frac{\log |t_2|}{2\log \frac{1}{u}}\rh).
	\end{equation*}

	\noindent Case 8 : $t_1\in(-1, -u^2)$, $t_2\in (1,\frac{1}{u^2})$. In this case 
	\begin{equation*}
	I_{t_1,t_2,u} = I_{11}\cup I'_{11}  \times I_9\cup I'_9
	\end{equation*}
	so that
	\begin{equation*}
	\et \frac{1}{|y_1y_2|}\chi_{I_{t_1,t_2,u}}(y_1,y_2)dy_1dy_2=\lf(4\log \frac{1}{u}\rh)^2\lf(1-\frac{\log |\frac{1}{t_1}|}{2\log \frac{1}{u}}\rh)\lf(1-\frac{\log |t_2|}{2\log \frac{1}{u}}\rh).
	\end{equation*}

	\noindent Case 9 : $t_1\in (-\frac{1}{u^2}, -1)$, $t_2\in (-1, -u^2)$. In this case
	\begin{equation*}
	I_{t_1,t_2,u} = I_{10}\cup I'_{10} \times I_{12}\cup I'_{12}, \qquad \text{where } I_{12}= \lf( -\frac{u}{t_2}, \frac{1}{u}\rh )
	\end{equation*}
	so that
	\begin{equation*}
	\et \frac{1}{|y_1y_2|}\chi_{I_{t_1,t_2,u}}(y_1,y_2)dy_1dy_2=\lf(4\log \frac{1}{u}\rh)^2\lf(1-\frac{\log |t_1|}{2\log \frac{1}{u}}\rh)\lf(1-\frac{\log |\frac{1}{t_2}|}{2\log \frac{1}{u}}\rh).
	\end{equation*}
	
	\noindent Case 10 : $t_1, t_2\in (-\frac{1}{u^2}, -1)$. In this case, it can be worked out that 
	\begin{equation*}
	I_{t_1,t_2,u} = I_{10}\cup I'_{10} \times I_{13}\cup I'_{13}, \qquad \text{where } I_{13}= \lf( u, -\frac{1}{ut_2}\rh )
	\end{equation*}
	and therefore,
	\begin{equation*}
	\et \frac{1}{|y_1y_2|}\chi_{I_{t_1,t_2,u}}(y_1,y_2)dy_1dy_2=\lf(4\log \frac{1}{u}\rh)^2\lf(1-\frac{\log |t_1|}{2\log \frac{1}{u}}\rh)\lf(1-\frac{\log |t_2|}{2\log \frac{1}{u}}\rh).
	\end{equation*}

	\noindent Case 11 : $t_1, t_2\in (-1, -u^2)$. In this case
	\begin{equation*}
		I_{t_1,t_2,u} = I_{11}\cup I'_{11} \times I_{12}\cup I'_{12} 
\end{equation*}
	so that
	\begin{equation*}
	\et \frac{1}{|y_1y_2|}\chi_{I_{t_1,t_2,u}}(y_1,y_2)dy_1dy_2=\lf(4\log \frac{1}{u}\rh)^2\lf(1-\frac{\log |\frac{1}{t_1}|}{2\log \frac{1}{u}}\rh)\lf(1-\frac{\log |\frac{1}{t_2}|}{2\log \frac{1}{u}}\rh).
	\end{equation*}
	
	\noindent Case 12 : $t_1\in(-1, -u^2)$, $t_2\in (-\frac{1}{u^2},-1)$. In this case 
	\begin{equation*}
	I_{t_1,t_2,u} = I_{11}\cup I'_{11}  \times I_{13}\cup I'_{13}
	\end{equation*}
	so that
	\begin{equation*}
	\et \frac{1}{|y_1y_2|}\chi_{I_{t_1,t_2,u}}(y_1,y_2)dy_1dy_2=\lf(4\log \frac{1}{u}\rh)^2\lf(1-\frac{\log |\frac{1}{t_1}|}{2\log \frac{1}{u}}\rh)\lf(1-\frac{\log |{t_2}|}{2\log \frac{1}{u}}\rh).
	\end{equation*}
	
	\noindent Case 13 : $t_1\in(u^2, 1)$, $ t_2\in ( -1, -u^2)$. In this case, it can be worked out that 
	\begin{equation*}
	I_{t_1,t_2,u} = I_6\cup I'_6  \times I_{12}\cup I'_{12} 
	\end{equation*}
	and therefore,
	\begin{equation*}
\et \frac{1}{|y_1y_2|}\chi_{I_{t_1,t_2,u}}(y_1,y_2)dy_1dy_2=\lf(4\log \frac{1}{u}\rh)^2\lf(1-\frac{\log |\frac{1}{t_1}|}{2\log \frac{1}{u}}\rh)\lf(1-\frac{\log |\frac{1}{t_2}|}{2\log \frac{1}{u}}\rh).
	\end{equation*}
		
	\noindent Case 14 : $t_1\in (1,\frac{1}{u^2})$, $t_2\in (-1, -u^2)$. In this case
	\begin{equation*}
	I_{t_1,t_2,u} = I_8\cup I'_8 \times I_{12}\cup I'_{12} 
	\end{equation*}
	so that
	\begin{equation*}
\et \frac{1}{|y_1y_2|}\chi_{I_{t_1,t_2,u}}(y_1,y_2)dy_1dy_2=\lf(4\log \frac{1}{u}\rh)^2\lf(1-\frac{\log |{t_1}|}{2\log \frac{1}{u}}\rh)\lf(1-\frac{\log |\frac{1}{t_2}|}{2\log \frac{1}{u}}\rh).
	\end{equation*}

	\noindent Case 15 : $t_1\in(u^2, 1)$, $t_2\in (-\frac{1}{u^2}, -1)$. In this case
	\begin{equation*}
		I_{t_1,t_2,u} = I_6\cup I'_6 \times I_{13}\cup I'_{13} 
\end{equation*}
	so that
	\begin{equation*}
	\et \frac{1}{|y_1y_2|}\chi_{I_{t_1,t_2,u}}(y_1,y_2)dy_1dy_2=\lf(4\log \frac{1}{u}\rh)^2\lf(1-\frac{\log |\frac{1}{t_1}|}{2\log \frac{1}{u}}\rh)\lf(1-\frac{\log |{t_2}|}{2\log \frac{1}{u}}\rh).
	\end{equation*}
	
	\noindent Case 16 : $t_1\in(1, \frac{1}{u^2})$, $t_2\in (-\frac{1}{u^2},-1)$. In this case 
	\begin{equation*}
	I_{t_1,t_2,u} = I_8\cup I'_8  \times I_{13}\cup I'_{13}
	\end{equation*}
	so that
	\begin{equation*}
\et \frac{1}{|y_1y_2|}\chi_{I_{t_1,t_2,u}}(y_1,y_2)dy_1dy_2=\lf(4\log \frac{1}{u}\rh)^2\lf(1-\frac{\log |{t_1}|}{2\log \frac{1}{u}}\rh)\lf(1-\frac{\log |{t_2}|}{2\log \frac{1}{u}}\rh).
	\end{equation*}
	Combining the above information and taking $f$ and $g$ as $f_u$ and $g_u$ respectively in (\ref {h0}), we obtain that
	\begin{align}\label{2.t3}
	\mathcal{J}= &\et \et \frac{|\phi(t_1,t_2)|}{|t_1t_2|^{2\al+2}}h_u(t_1,t_2)dt_1dt_2\nonumber\\
	& \geq \et \frac{|\phi(t_1,t_2)|}{|t_1t_2|^{{2\al+2-\frac{1}{p}}}} \inf_{(y_1,y_2)\in \R^2}\lf(\frac {v(t_1y_1, t_2y_2)}{v(y_1,y_2)}\rh)^{\frac{1}{p}}\et \frac{1}{|y_1y_2|}\chi_{I_{t_1,t_2,u}}(y_1,y_2)dy_1dy_2 dt_1dt_2\nonumber\\
	& =\lf(4 \log \frac{1}{u}\rh)^2 \lf( \int_{-\frac{1}{u^2}}^{-u^2}\int_{-\frac{1}{u^2}}^{-u^2}+ \int_{u^2}^{\frac{1}{u^2}}\int_{u^2}^{\frac{1}{u^2}}+\int_{u^2}^{\frac{1}{u^2}}\int_{-\frac{1}{u^2}}^{-u^2}+\int_{-\frac{1}{u^2}}^{-u^2}\int_{u^2}^{\frac{1}{u^2}}\rh)\lf(\frac{|\phi(t_1,t_2)|}{|t_1t_2|^{{2\al+2}-\frac{1}{p}}}\rh.\times\nonumber\\
	& \quad \times  \inf_{(y_1,y_2)\in \R^2}\lf(\frac {v(t_1y_1, t_2y_2)}{v(y_1,y_2)}\rh)^{\frac{1}{p}}\lf.\lf(1-\frac{\xi(t_1)}{2\log \frac{1}{u}}\rh)\lf(1-\frac{\xi(t_2)}{2\log \frac{1}{u}}\rh)dt_1dt_2\rh),
	\end{align}
	where for $i=1,2$
		\begin{equation*}
	\xi(t_i)=
	\begin{cases}
	\log |\frac{1}{t_i}|,& {\rm if}\, t_i\in (u^2, 1]\cup(-1, -u^2] \\
	\log |t_i| , & {\rm if}\, t_i\in (1, 1/u^2]\cup (-1/u^2, -1).	
	\end{cases}	
	\end{equation*}
	Now,  by using the test functions $f_u,$ $g_u$ in (\ref{h2.1.1}) and using (\ref{h2.1.2}), (\ref{2.t3}), we get
	\begin{align*}
	&\lf( \int_{-\frac{1}{u^2}}^{-u^2}\int_{-\frac{1}{u^2}}^{-u^2}+ \int_{u^2}^{\frac{1}{u^2}}\int_{u^2}^{\frac{1}{u^2}}+\int_{u^2}^{\frac{1}{u^2}}\int_{-\frac{1}{u^2}}^{-u^2}+\int_{-\frac{1}{u^2}}^{-u^2}\int_{u^2}^{\frac{1}{u^2}}\rh) 
	\lf(\frac{|\phi(t_1,t_2)|}{|t_1t_2|^{{{2\al+2}-\frac{1}{p}}}} \inf_{(y_1,y_2)\in \R^2}\lf(\frac {v(t_1y_1, t_2y_2)}{v(y_1,y_2)}\rh)^{\frac{1}{p}}\rh.\times\\
	&\quad\times \lf.\lf(1-\frac{\xi(t_1)}{2\log \frac{1}{u}}\rh)\lf(1-\frac{\xi(t_2)}{2\log \frac{1}{u}}\rh)dt_1dt_2\rh)\leq \|H_{\al, \phi}\|.
	\end{align*}
By the Monotone Convergence Theorem and on taking $u\rightarrow 0$ we have 
	$$\et\frac{|\phi(t_1,t_2)|}{|t_1t_2|^{{{2\al+2}-\frac{1}{p}}}} \inf_{(y_1,y_2)\in \R^2}\lf(\frac {v(t_1y_1, t_2y_2)}{v(y_1,y_2)}\rh)^{\frac{1}{p}} dt_1dt_2\leq \|\sH_{\al,\phi}\|$$
	and we are done.
	\end{proof}
	
On the lines of Theorem \ref{t2.3}, a characterization of the boundedness of $\sH_{\al,\phi}:L^p_v(\R^2)\to L^p_v(\R^2)$  can be obtained. Precisely, we have the following

\begin{theorem}\label{t3.4}
	Let $1<p<\infty$, $v$ be weight function and $\phi\in L^1(\R^2)$. Let the following be satisfied for some constant $c>0$: 
	$$
	\sup_{(y_1,y_2)\in \R^2}\lf(\frac {v(t_1y_1, t_2y_2)}{v(y_1,y_2)}\rh)\le c \inf_{(y_1,y_2)\in \R^2}\lf(\frac {v(t_1y_1, t_2y_2)}{v(y_1,y_2)}\rh).
	$$ 
	Then the operator $\sH_{\al,\phi}:L^p_v(\R^2)\to L^p_v(\R^2)$ is  bounded if and only if $\sA_{\sup} <\infty$ and moreover the following estimates hold:
	\begin{equation*}
		{1\over c} \sA_{\sup} \le \|\sH_{\al,\phi}\|_{L^p_v(\R^2)\to L^p_v(\R^2)}\le \sA_{\sup}.
	\end{equation*}
\end{theorem}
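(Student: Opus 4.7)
The plan is to deduce Theorem \ref{t3.4} as an essentially immediate corollary of Theorems \ref{t3.2} and \ref{t3.3}, mirroring the passage from Theorems \ref{t2.1}--\ref{t2.2} to Theorem \ref{t2.3} in the one-dimensional case. The upper estimate $\|\sH_{\al,\phi}\|_{L^p_v(\R^2)\to L^p_v(\R^2)}\le \sA_{\sup}$ is precisely the content of Theorem \ref{t3.2}, which also supplies the ``if'' direction of the characterization: whenever $\sA_{\sup}<\infty$, the operator is bounded.

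For the lower estimate, I would start from Theorem \ref{t3.3}, which yields
$$\|\sH_{\al,\phi}\|_{L^p_v(\R^2)\to L^p_v(\R^2)}\ge \sA_{\inf}.$$
The pointwise hypothesis
$$\sup_{(y_1,y_2)\in \R^2}\frac{v(t_1y_1,t_2y_2)}{v(y_1,y_2)}\le c\inf_{(y_1,y_2)\in \R^2}\frac{v(t_1y_1,t_2y_2)}{v(y_1,y_2)}$$
holds for a.e. $(t_1,t_2)\in\R^2$. Raising both sides to the power $1/p$, multiplying by the common positive factor $|\phi(t_1,t_2)|/|t_1t_2|^{2\al+2-1/p}$, and integrating over $\R^2$ gives
$$\sA_{\sup}\le c^{1/p}\sA_{\inf}.$$
Combining with Theorem \ref{t3.3} and absorbing the $1/p$ exponent into the constant (a harmless renaming identical to the one implicit in Theorem \ref{t2.3}) delivers $\frac{1}{c}\sA_{\sup}\le \|\sH_{\al,\phi}\|_{L^p_v(\R^2)\to L^p_v(\R^2)}$.

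The equivalence ``boundedness iff $\sA_{\sup}<\infty$'' then closes automatically: sufficiency was noted above, while necessity follows from the lower bound, since boundedness forces $\sA_{\sup}\le c\|\sH_{\al,\phi}\|_{L^p_v(\R^2)\to L^p_v(\R^2)}<\infty$. I do not expect any genuine obstacle, as no new test functions or geometric case analysis are required beyond those already treated in Theorem \ref{t3.3}; the only delicate point is the bookkeeping of the constant under the exponent $1/p$, which is absorbed into the statement's constant $c$.
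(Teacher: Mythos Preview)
Your proposal is correct and mirrors exactly the paper's approach: the paper states Theorem \ref{t3.4} without proof, noting that it follows ``on the lines of Theorem \ref{t2.3}'' from Theorems \ref{t3.2} and \ref{t3.3}, which is precisely your argument. Your observation about the constant is also apt---the derivation actually yields the sharper $c^{-1/p}\sA_{\sup}$, and since necessarily $c\ge 1$ and $p>1$ this implies the stated bound $c^{-1}\sA_{\sup}$.
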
 

\begin{corollary}\label{t3.5} Let $1<p<\infty$, $v$ be weight function and $\phi\in L^1(\R^2)$. If there exists a function $h$ such that $v(x_1y_1,x_2y_2)=v(x_1,x_2)h(y_1,y_2)$, then $\sA_{\sup} = \sA_{\inf}$ and
	\begin{equation*}
		\|\sH_{\al,\phi}\|_{L^p_v(\R^2)\to L^p_v(\R^2)} = \ent \frac{|\phi(t_1,t_2)|}{{|t_1t_2|^{2\al+2-\frac{1}{p}}}}h^{1/p}(t_1,t_2)\, dt_1dt_2.
	\end{equation*}
	
\end{corollary}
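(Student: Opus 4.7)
The plan is to show that under the multiplicative hypothesis on $v$, the ratio $v(t_1y_1, t_2y_2)/v(y_1,y_2)$ becomes independent of $(y_1,y_2)$, so that the quantities $\sA_{\sup}$ and $\sA_{\inf}$ coincide; the two-sided norm identity then follows immediately by applying Theorems \ref{t3.2} and \ref{t3.3}.

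First I would rename variables in the hypothesis $v(x_1y_1,x_2y_2)=v(x_1,x_2)h(y_1,y_2)$, taking $(x_1,x_2)=(y_1,y_2)$ and renaming $(y_1,y_2)\to(t_1,t_2)$. This produces the relation
\begin{equation*}
v(t_1y_1,t_2y_2)=v(y_1,y_2)\,h(t_1,t_2)\qquad\text{for all }(y_1,y_2),(t_1,t_2)\in\R^2.
\end{equation*}
Since $v$ is a weight, it is positive almost everywhere, so dividing yields $v(t_1y_1,t_2y_2)/v(y_1,y_2)=h(t_1,t_2)$, a quantity depending only on $(t_1,t_2)$. Consequently
\begin{equation*}
\sup_{(y_1,y_2)\in\R^2}\frac{v(t_1y_1,t_2y_2)}{v(y_1,y_2)}=\inf_{(y_1,y_2)\in\R^2}\frac{v(t_1y_1,t_2y_2)}{v(y_1,y_2)}=h(t_1,t_2),
\end{equation*}
and inserting this into the definitions of $\sA_{\sup}$ and $\sA_{\inf}$ gives
\begin{equation*}
\sA_{\sup}=\sA_{\inf}=\et \frac{|\phi(t_1,t_2)|}{|t_1t_2|^{2\al+2-1/p}}\,h^{1/p}(t_1,t_2)\,dt_1dt_2.
\end{equation*}

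To conclude, I would invoke Theorem \ref{t3.2} for the upper bound $\|\sH_{\al,\phi}\|_{L^p_v(\R^2)\to L^p_v(\R^2)}\le \sA_{\sup}$ and Theorem \ref{t3.3} for the lower bound $\|\sH_{\al,\phi}\|\ge \sA_{\inf}$. Since both quantities equal the displayed integral, the norm identity of the corollary follows. There is essentially no obstacle in this argument; the only point requiring a moment's care is the symmetry observation that makes the ratio independent of $(y_1,y_2)$, and this is a purely algebraic consequence of the functional equation combined with the positivity of $v$.
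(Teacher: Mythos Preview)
Your proposal is correct and matches the paper's intended argument: the corollary is stated without proof in the paper precisely because, once the functional equation forces $v(t_1y_1,t_2y_2)/v(y_1,y_2)=h(t_1,t_2)$ independently of $(y_1,y_2)$, the equality $\sA_{\sup}=\sA_{\inf}$ is immediate and the norm identity follows from Theorems \ref{t3.2} and \ref{t3.3} (equivalently, from Theorem \ref{t3.4} with $c=1$).
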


\begin{remark}
For $\al=-1/2$, $\sH_{\al,\phi}$ reduces to the two dimensional Hausdorff operator

$$
\sH_{\phi}f(x_1,x_2)=\et \frac{|\phi(t_1,t_2)|}{|t_1t_2|}f\lf(\frac{x_1}{t_1},\frac{x_2}{t_2}\rh)dt_1dt_2
$$
which on replacement $\phi(s_1,s_2)=\frac{1}{s_1s_2}\psi\lf(\frac{1}{s_1},\frac{1}{s_2}\rh)$ becomes equivalent to 
$$
\mathscr G_\psi g(x_1,x_2)=\et \psi\lf(\frac{t_1}{x_1},\frac{t_2}{x_2}\rh) g(t_1,t_2)\,dt_1dt_2.
$$
The $L^p(\R^+\times \R^+)$-boundedness of $\mathscr G_\psi$ (consequently of $\mathscr H_\psi$) was proved in \cite{jain}. Moreover, if we take
$$
\phi(t_1,t_2)=\frac{1}{t_1t_2}\chi_{(1,\infty)}(t_1) \chi_{(1,\infty)}(t_2)
$$
then $\mathscr H_\psi f$ becomes the two-dimensional Hardy operator \cite{sawyer}
$$
H_2 f(x_1,x_2)=\frac{1}{x_1x_2}\int_0^{x_1} \int_0^{x_2} f(t_1,t_2)\,dt_1dt_2.
$$
\end{remark}

\section{Some generalizations}

In this section, we shall prove generalizations of some of the theorems proved in the previous section. To begin with, the following theorem is a two-weight generalization of Theorem \ref{t2.1}	

\begin{theorem}\label{t4.1}
	Let $1<p<\infty$, $v,w$ be weight functions and $\phi\in L^1(\R)$ be such that $$\displaystyle \ent \frac{|\phi(t)|}{{|t|^{{2\al+2}-\frac{1}{p}}}}\lf(\sup_{y\in\R}\frac{v(ty)}{w(y)}\rh)^{1/p} dt =:B_{\sup}<\infty.$$ Then the operator $H_{\al,\phi}:L^p_w(\R)\to L^p_v(\R)$ is bounded  and 
	$$\|H_{ \al,\phi}f\|_{L^p_v(\R)}\leq B_{\sup} \|f\|_{L^{p}_{w}(\R)}.
	$$
\end{theorem}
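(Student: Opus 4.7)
The plan is to mimic the proof of Theorem~\ref{t2.1} almost verbatim, exploiting the fact that in that earlier argument the weight $v$ played two distinct roles, one as the target weight in the $L^p_v(\R)$ norm of $H_{\al,\phi}f$, and one as the source weight that appeared only at the very last H\"older step. Replacing this second occurrence by $w$ is the sole modification required. Since the key identity $v(yt) = w(y)\cdot v(yt)/w(y)$ is structurally the same as $v(yt)=v(y)\cdot v(yt)/v(y)$ used before, the full chain of inequalities leading to \eqref{enew1} survives intact with $w$ in place of $v$ in the denominator of the supremum.

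Concretely I would proceed in three steps. First, apply the generalized Minkowski inequality to pull the $t$-integral outside the $L^p_v(\R)$ norm, giving
\begin{equation*}
\|H_{\al,\phi}f\|_{L^p_v(\R)} \le \ent \frac{|\phi(t)|}{|t|^{2\al+2}} \lf(\ent \lf|f\lf(\tfrac{x}{t}\rh)\rh|^p v(x)\, dx\rh)^{1/p} dt.
\end{equation*}
Second, perform the substitution $y = x/t$ inside the inner integral, which is valid for every $t\ne 0$ with Jacobian $|t|$ irrespective of the sign of $t$, yielding
\begin{equation*}
\|H_{\al,\phi}f\|_{L^p_v(\R)} \le \ent \frac{|\phi(t)|}{|t|^{2\al+2-1/p}} \lf(\ent |f(y)|^p v(ty)\, dy\rh)^{1/p} dt.
\end{equation*}
Third, inside the inner integral write $v(ty) = w(y)\cdot v(ty)/w(y)$, bound the ratio by its supremum over $y\in\R$ (which is a function of $t$ alone and hence factors out of the $y$-integration), and recognize what remains as $\|f\|_{L^p_w(\R)}^p$. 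Collecting the resulting constants produces exactly the bound $B_{\sup}\|f\|_{L^p_w(\R)}$.

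I do not anticipate any real obstacle: every step is an immediate two-weight analogue of its one-weight counterpart in Theorem~\ref{t2.1}, and the hypothesis $B_{\sup}<\infty$ is used precisely once at the end to guarantee finiteness of the outer $t$-integral. The only small care to be taken is to make sure the supremum in $y\in\R$ is finite for almost every $t$ in the support of $\phi$, which is automatic from $B_{\sup}<\infty$ by Fubini/Tonelli applied to the defining integral.
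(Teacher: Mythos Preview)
Your proposal is correct and matches the paper's own proof essentially verbatim: the paper simply notes that the argument of Theorem~\ref{t2.1} goes through once one replaces the factor $v\,v^{-1}$ by $w\,w^{-1}$ in the line \eqref{enew1}, which is exactly your third step.
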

\begin{proof}
It follows on the similar lines as that of the proof of Theorem \ref{t2.1} by replacing $vv^{-1}$ by $ww^{-1}$	in \eqref{enew1}.
\end{proof}

 Theorem \ref{t4.1} has the following version for two indices $p,q$:
 
 \begin{theorem}\label{t4.2}
 	Let $1<q<p<\infty$, $v,w$ be weight functions and $\phi\in L^1(\R)$ be such that $$\displaystyle \ent \frac{|\phi(t)|}{{|t|^{{2\al+2}-\frac{1}{p}}}}\lf(\ent\frac{[v(ty)]^\frac{p}{p-q}}{[w(y)]^\frac{q}{p-q}}dy\rh)^\frac{p-q}{pq} dt =:D_{\sup}<\infty.$$ Then the operator $H_{\al,\phi}:L^p_w(\R)\to L^q_v(\R)$ is bounded  and 
 	$$\|H_{ \al,\phi}f\|_{L^q_v(\R)}\leq D_{\sup} \|f\|_{L^{p}_{w}(\R)}.
 	$$
 \end{theorem}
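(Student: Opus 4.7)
The strategy is to adapt the three-step proof of Theorem \ref{t2.1} (generalized Minkowski inequality, change of variables $y=x/t$, and Hölder's inequality), replacing the conjugate pair $(p,p')$ used there by the Hölder pair $(p/q,\,p/(p-q))$. This replacement is legitimate precisely because the hypothesis $q<p$ makes $p/q>1$. The role played by the identity $v=v^{1/p}\cdot v^{-1/p}$ in line \eqref{enew1} will here be played by the factorization $v(ty)=[v(ty)\,w(y)^{-q/p}]\cdot w(y)^{q/p}$, which is precisely what is needed so that the $L^p_w$--norm of $f$ emerges from one of the Hölder factors while the other factor assembles the quantity inside $D_{\sup}$.

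Concretely, I would first invoke the generalized Minkowski inequality with outer $L^q_v$-norm to obtain
\begin{equation*}
\|H_{\al,\phi}f\|_{L^q_v(\R)}\le \ent \frac{|\phi(t)|}{|t|^{2\al+2}}\lf(\ent \lf|f\lf(\tfrac{x}{t}\rh)\rh|^q v(x)\,dx\rh)^{1/q}dt.
\end{equation*}
The substitution $y=x/t$ in the inner integral then produces the factor $|t|^{1/q}$, giving
\begin{equation*}
\lf(\ent \lf|f\lf(\tfrac{x}{t}\rh)\rh|^q v(x)\,dx\rh)^{1/q}=|t|^{1/q}\lf(\ent |f(y)|^q v(ty)\,dy\rh)^{1/q}.
\end{equation*}
Next, I would apply Hölder's inequality with exponents $p/q$ and $p/(p-q)$ to the product $|f(y)|^q v(ty)=\bigl(|f(y)|^q w(y)^{q/p}\bigr)\cdot\bigl(w(y)^{-q/p}\,v(ty)\bigr)$, which yields
\begin{equation*}
\lf(\ent |f(y)|^q v(ty)\,dy\rh)^{1/q}\le \|f\|_{L^p_w(\R)}\lf(\ent \frac{[v(ty)]^{p/(p-q)}}{[w(y)]^{q/(p-q)}}\,dy\rh)^{(p-q)/(pq)}.
\end{equation*}
Substituting this back into the Minkowski estimate, $\|f\|_{L^p_w(\R)}$ factors out of the $t$-integral and what remains is exactly $D_{\sup}$, giving $\|H_{\al,\phi}f\|_{L^q_v(\R)}\le D_{\sup}\,\|f\|_{L^p_w(\R)}$.

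The proof presents no conceptual obstacle, being a direct adaptation of Theorem \ref{t2.1}; the only matter requiring some attention is the bookkeeping of the exponent on $|t|$ produced by combining $|t|^{-(2\al+2)}$ (from the kernel) with $|t|^{1/q}$ (from the change of variables), which must be reconciled with the normalization $|t|^{2\al+2-1/p}$ used in the statement of $D_{\sup}$. When $q=p$, this reduces to the exponent appearing in \eqref{enew1}, showing that the present argument is a strict generalization of Theorem \ref{t2.1}.
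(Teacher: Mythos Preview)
Your proposal is correct and follows essentially the same route as the paper's own proof: generalized Minkowski inequality for the outer $L^q_v$-norm, the substitution $y=x/t$ producing the factor $|t|^{1/q}$, and H\"older's inequality with exponents $p/q$ and $p/(p-q)$ applied to the factorization $|f(y)|^q v(ty)=\bigl(|f(y)|^q w(y)^{q/p}\bigr)\cdot\bigl(w(y)^{-q/p}v(ty)\bigr)$. Your caution about the exponent on $|t|$ is well placed: the paper's computation likewise arrives at $|t|^{2\al+2-1/q}$ in the final line, so the $|t|^{2\al+2-1/p}$ appearing in the displayed definition of $D_{\sup}$ is a misprint in the statement.
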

 
 \begin{proof}
 		Let $f \in L^p_v(\R)$ then by using generalised Minkowski inequality, change of variables, H\"older's inequality for $p/q>1$ and , we have
	\begin{align*}
	\|H_{ \al,\phi}f\|_{L^q_v(\R)}
	& =\lf(\ent |H_{ \al,\phi} f(x)|^q v(x)dx \rh)^\frac{1}{q} \\
	& = \lf(\ent\lf|\ent \frac{|\phi(t)|}{|t|^{2\al+2}}f\lf(\frac{x}{t}\rh) dt\rh|^q v(x)dx \rh)^\frac{1}{q} \\
	& \leq \lf(\ent \lf(\ent \frac{|\phi(t)|}{|t|^{2\al+2}}\lf|f\lf(\frac{x}{t}\rh)\rh|v^{\frac{1}{q}}(x)dt\rh)^q dx \rh)^\frac{1}{q}\\
	& \leq  \ent \frac{|\phi(t)|}{|t|^{2\al+2}}|t|^{\frac{1}{q}} \lf(\ent \lf|f(y)\rh|^q v(yt)dy\rh)^\frac{1}{q} dt\\
	& =  \ent \frac{|\phi(t)|}{|t|^{{2\al+2}-\frac{1}{q}}}\lf(\ent \lf|f(y)\rh|^q w^{\frac{q}{p}}(y)w^{-\frac{q}{p}}(y) v(yt)dy\rh)^{\frac{1}{q}} dt\\
	&\leq \lf(\ent \frac{|\phi(t)|}{|t|^{{2\al+2}-\frac{1}{q}}} \lf(\ent\frac{[v(ty)]^\frac{p}{p-q}}{[w(y)]^\frac{q}{p-q}}dt\rh)^\frac{p-q}{pq} dt \rh)\lf(\ent |f(y)|^p w(y) dy \rh)^{\frac{1}{p}}\\
	& = D_{\sup} \|f\|_{L^p_w(\R)}
	\end{align*}
and the assertion follows.

 \end{proof}
 
 Two dimensional versions of Theorems \ref{t4.1} and \ref{t4.2} can also be proved. We only state the results.
 
 \begin{theorem}
	Let $1<p<\infty$, $v,w$ be weight functions and $\phi\in L^1(\R^2)$ be such that $$\displaystyle \et \frac{|\phi(t_1, t_2)|}{{|t_1t_2|^{{2\al+2}-\frac{1}{p}}}}\lf(\sup_{(y_1, y_2)\in\R^2}\frac{v(t_1y_1, t_2y_2)}{w(y_1,y_2)}\rh)^{1/p} dt_1dt_2 =:\mathscr B_{\sup}<\infty.$$ Then the operator $\sH_{\al,\phi}:L^p_w(\R^2)\to L^p_v(\R^2)$ is bounded  and 
	$$\|\sH_{ \al,\phi}f\|_{L^p_v(\R^2)}\leq {\mathscr B}_{\sup} \|f\|_{L^{p}_{w}(\R^2)}.
	$$
 	
 \end{theorem}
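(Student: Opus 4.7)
The plan is to mimic the proof of Theorem \ref{t2.1} (or equivalently its two-dimensional version Theorem \ref{t3.2}) with the single substitution $vv^{-1}\mapsto ww^{-1}$ inside the Hölder step, exactly as was done in passing from Theorem \ref{t2.1} to Theorem \ref{t4.1}. Concretely, I start from
\[
\|\sH_{\al,\phi}f\|_{L^p_v(\R^2)}=\left(\et\left|\et\frac{|\phi(t_1,t_2)|}{|t_1t_2|^{2\al+2}}f\!\left(\frac{x_1}{t_1},\frac{x_2}{t_2}\right)dt_1dt_2\right|^p v(x_1,x_2)\,dx_1dx_2\right)^{1/p}
\]
and apply the generalised Minkowski inequality in two variables to interchange the outer $L^p_v$-norm with the integration in $(t_1,t_2)$.

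Next I perform the change of variables $y_i=x_i/t_i$ (so $dx_i=|t_i|\,dy_i$ for $i=1,2$), which brings out a factor $|t_1t_2|^{1/p}$ and replaces the weight $v(x_1,x_2)$ by $v(t_1y_1,t_2y_2)$. At this point the inner integral reads
\[
\left(\et |f(y_1,y_2)|^p\,v(t_1y_1,t_2y_2)\,dy_1dy_2\right)^{1/p}.
\]
Inserting the factor $w(y_1,y_2)\,w^{-1}(y_1,y_2)$ inside the integrand and bounding the ratio $v(t_1y_1,t_2y_2)/w(y_1,y_2)$ by its supremum over $(y_1,y_2)\in\R^2$ (this is the analogue of the Hölder step in Theorem \ref{t2.1}, but with the two weights) gives
\[
\le \left(\sup_{(y_1,y_2)\in\R^2}\frac{v(t_1y_1,t_2y_2)}{w(y_1,y_2)}\right)^{1/p}\!\left(\et |f(y_1,y_2)|^p\,w(y_1,y_2)\,dy_1dy_2\right)^{1/p}.
\]

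Finally I pull the $f$-dependent factor $\|f\|_{L^p_w(\R^2)}$ outside the $(t_1,t_2)$-integration and recognise the remaining double integral as $\mathscr B_{\sup}$, which yields the desired estimate
\[
\|\sH_{\al,\phi}f\|_{L^p_v(\R^2)}\le \mathscr B_{\sup}\,\|f\|_{L^p_w(\R^2)}.
\]
There is no real obstacle here: the argument is entirely parallel to the one-dimensional two-weight Theorem \ref{t4.1}, the only bookkeeping difference being the product weight $|t_1t_2|^{2\al+2-1/p}$ and the two-variable supremum, both of which are already built into the definition of $\mathscr B_{\sup}$. Thus the proof consists only in writing out the chain of inequalities above, and can legitimately be left to the reader as the authors have done.
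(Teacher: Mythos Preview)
Your proposal is correct and follows exactly the approach the paper intends: the authors explicitly write ``Two dimensional versions of Theorems \ref{t4.1} and \ref{t4.2} can also be proved. We only state the results,'' so there is no written proof to compare against, but your argument is precisely the natural two-dimensional adaptation of the proof of Theorem \ref{t4.1} (itself obtained from Theorem \ref{t2.1} via the substitution $vv^{-1}\to ww^{-1}$), and coincides with what Theorem \ref{t3.2} already does in the one-weight case.
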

 
 \begin{theorem}
 	 	Let $1<q<p<\infty$, $v,w$ be weight functions and $\phi\in L^1(\R^2)$ be such that $$\displaystyle \et \frac{|\phi(t_1,t_2)|}{{|t_1t_2|^{{2\al+2}-\frac{1}{p}}}}\lf(\et \frac{[v(t_1y_1, t_2y_2)]^\frac{p}{p-q}}{[w(y_1,y_2)]^\frac{q}{p-q}}\rh)^\frac{p-q}{pq} dt_1dt_2 =:\mathscr D_{\sup}<\infty.$$ Then the operator $\mathscr H_{\al,\phi}:L^p_w(\R^2)\to L^q_v(\R^2)$ is bounded  and 
 	$$\|\mathscr H_{ \al,\phi}f\|_{L^q_v(\R^2)}\leq \mathscr D_{\sup} \|f\|_{L^{p}_{w}(\R^2)}.
 	$$
 \end{theorem}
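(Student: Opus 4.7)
The plan is to lift the proof of Theorem \ref{t4.2} verbatim from one dimension to two, replacing every single integral by a double integral and tracking the $|t_1 t_2|$-powers in place of $|t|$-powers. No new analytic ingredient (and, in particular, no test-function construction as in Theorem \ref{t3.3}) is required, because this is a sufficiency statement only.

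Concretely, starting from
$$\|\mathscr H_{\al,\phi} f\|_{L^q_v(\R^2)} = \lf(\et \lf|\et \frac{|\phi(t_1,t_2)|}{|t_1 t_2|^{2\al+2}}\,f\lf(\frac{x_1}{t_1},\frac{x_2}{t_2}\rh)\,dt_1 dt_2\rh|^q v(x_1,x_2)\, dx_1 dx_2\rh)^{1/q},$$
I would first apply the generalized Minkowski inequality (legitimate since $q>1$) to push the outer $L^q$-norm in $(x_1,x_2)$ inside the $(t_1,t_2)$-integration. Then, for each fixed $(t_1,t_2)$, the substitution $y_i = x_i/t_i$ ($i=1,2$) produces a Jacobian $|t_1 t_2|$; after raising to the $1/q$-th power this contributes a factor $|t_1 t_2|^{1/q}$, which combines with the existing $|t_1 t_2|^{-(2\al+2)}$ to give $|t_1 t_2|^{-(2\al+2-1/q)}$ in the outer kernel and leaves the weighted $L^q$-norm of $f$ evaluated against the dilated weight $v(t_1 y_1, t_2 y_2)$.

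Next I would perform the Hölder step. Inside the $(y_1,y_2)$-integral I would multiply and divide by $w^{q/p}(y_1,y_2)$, splitting the integrand $|f(y_1,y_2)|^q\, v(t_1 y_1, t_2 y_2)$ as $(|f|^q w^{q/p})\cdot (w^{-q/p}\, v(t_1 y_1, t_2 y_2))$, and then apply Hölder with the conjugate pair $(p/q,\ p/(p-q))$. The first factor integrates to $\|f\|_{L^p_w(\R^2)}^{q}$, while the second factor produces
$$\lf(\et \frac{[v(t_1 y_1,t_2 y_2)]^{p/(p-q)}}{[w(y_1,y_2)]^{q/(p-q)}}\, dy_1 dy_2\rh)^{(p-q)/p}.$$
Taking the $1/q$-th power detaches $\|f\|_{L^p_w(\R^2)}$ from the $(t_1,t_2)$-integration, and pairing the remaining factor against $|\phi(t_1,t_2)|/|t_1 t_2|^{2\al+2-1/q}$ yields exactly $\mathscr D_{\sup}$ (in the form appearing in the proof of Theorem \ref{t4.2}, with the same exponent convention as there).

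The only real obstacle is careful bookkeeping of exponents: one must check that the Hölder pair $(p/q,\,p/(p-q))$ reproduces precisely the integrand defining $\mathscr D_{\sup}$, and that after raising to $1/q$ the $|t_1 t_2|$-powers assemble correctly. Everything else (Minkowski, change of variables, Fubini for the $t$-integration) is routine two-dimensional analogue of the one-dimensional computation already carried out in Theorem \ref{t4.2}.
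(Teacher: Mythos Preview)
Your proposal is correct and matches the paper's own approach exactly: the paper does not write out a proof for this theorem but merely states it as the two-dimensional analogue of Theorem \ref{t4.2}, to be proved by the same chain (Minkowski, change of variables, insertion of $w^{q/p}w^{-q/p}$, H\"older with exponents $p/q$ and $p/(p-q)$). You have reproduced precisely that argument with the double integrals and the $|t_1t_2|$-Jacobian, and you have also correctly noted the same $1/q$ versus $1/p$ exponent discrepancy that already appears between the statement and the proof of Theorem \ref{t4.2}.
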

 
 \noindent {\it Acknowledgement.} The third author acknowledges the MATRICS Research Grant No. MTR/2017/000126 of SERB, Department of Science and Technology, India.

\end{document}